\documentclass[sts]{imsart}
\usepackage{latexsym}
\usepackage{graphicx}
\usepackage{grffile}  % for inserting image name with unusal character
\usepackage{mathrsfs} % for command \mathscr
\usepackage{amsmath,amssymb,amsthm}  % math packages
\usepackage[american]{babel}
\usepackage[colorlinks,citecolor=blue,urlcolor=blue]{hyperref}
\usepackage{bm}
\usepackage{dsfont}
\usepackage{subfigure}
\usepackage{xcolor}
\usepackage[T1]{fontenc}
\usepackage[utf8]{inputenc}
\usepackage{lmodern}
\usepackage{float}
\usepackage{caption}
\usepackage{mathtools}
\usepackage{color}
\usepackage{booktabs} % for nice looking tables
\usepackage{bbm}  % for mathbbm
% For algorithm presentation
\usepackage{algorithm}
\usepackage{algorithmic}

% Special counter for Theorem in Appendix
\usepackage{apptools}
%\AtAppendix{\counterwithin{lemma}{section}}

\newcommand{\egaldef}{\stackrel{\Delta}{=}}

\captionsetup{compatibility=false}  % fix compatibility problem with subcaption
\usepackage[symbol]{footmisc}
\usepackage{comment}

%%%%%%%%%%%%%%%%%%%%%%%%%%%%%%%%%%%%%%%%%%%%%%%%%%%%%%%%%%%%%%%%%%%%%%%%%%%%%%%

\usepackage{custom_style}

\theoremstyle{definition}

\def\P{\mathds{P}}
\def\*#1{\mathbf{#1}}

% Shape of the document and indentation
\setlength{\parindent}{0pt}
\setlength{\parskip}{6pt}
\usepackage[left=1.4in, right=1.4in]{geometry}

\newtheorem{lemma}{Lemma} % Lemma 

\newtheorem{remark}{Remark}
\newtheorem{theorem}{Theorem}
\newtheorem{proposition}{Proposition}
\newtheorem{definition}{Definition}
\newtheorem{assumption}{Assumption}
\newtheorem{result}{Result}
\newcommand{\Comment}[1]{\textcolor{blue}{\textbf{\texttt{// #1}}}}
\newcommand{\paren}[1]{\left( #1 \right)} 
\newcommand{\set}[1]{\left\{ #1 \right\}}
\newcommand{\R}{\mathbb{R}}
\newcommand{\bhl}{\widehat{\bm\beta}_{\lambda}}
\newcommand{\fhl}{\widehat{f}_{\lambda}}
\newcommand{\Jhl}{\widehat{J}_{\lambda}}
\newcommand{\zh}{\widehat{\bz}}
\renewcommand{\P}{\mathbb{P}}

\newcommand\numberthis{\addtocounter{equation}{1}\tag{\theequation}}

%% End user-insert packages

\begin{document}
\begin{frontmatter}
\title{Aggregation of Multiple Knockoffs}
\runtitle{Aggregation of Multiple Knockoffs}
\date{}

\author[1,2]{Tuan-Binh Nguyen
  \thanks{Corresponding email: \textsc{tuan-binh.nguyen@inria.fr}}}
\author[2]{J\'er\^ome-Alexis Chevalier}\\
\author[2]{Bertrand Thirion}
\author[1]{Sylvain Arlot}

\runauthor{Nguyen et al.}

\affiliation[1]{Universit\'e Paris-Saclay, CNRS, Inria, Laboratoire de
  math\'ematiques d'Orsay, 91405, Orsay, France}
\affiliation[2]{Inria, CEA, Universit\'e Paris-Saclay, France}
% \affil[ ]{\textit {\{
% tuan-binh.nguyen,jerome-alexis.chevalier,bertrand.thirion,sylvain.arlot\}@inria.fr}}

\maketitle
\begin{abstract}
  We develop an extension of the knockoff inference procedure, introduced by
  Barber and Cand\`es [2015].
  This new method, called aggregation of multiple knockoffs (AKO), addresses
  the instability inherent to the random nature of knockoff-based inference.
  Specifically, AKO improves both the stability and power compared with the
  original knockoff algorithm while still maintaining guarantees for false
  discovery rate control.
  We provide a new inference procedure, prove its core properties, and
  demonstrate its benefits in a set of experiments on synthetic and real
  datasets.
  \footnote{Code is available at:
    \href{https://github.com/ja-che/hidimstat}{github.com/ja-che/hidimstat}}
\end{abstract}

\end{frontmatter}
%%%%%%%%%%%%%%%%%%%%%%%%%%%%%%%%%%%%%%%%%%%%%%%%%%%%%%%%%%%%%%%%%%%%%%%%%%%%%%%%

\section{Introduction}

In many fields, multivariate statistical models are used to \textit{fit} some
outcome of interest through a combination of measurements or features.
For instance, one might predict the likelihood for individuals to declare a
certain type of disease based on genotyping information.
Besides prediction accuracy, the inference problem consists in defining which
measurements carry useful features for prediction.
More precisely, we aim at conditional inference (as opposed to marginal
inference), that is, analyzing which features carry information \emph{given} the
other features.
This inference is however very challenging in high-dimensional settings.

Among the few available solutions, knockoff-based (KO) inference
\cite{barber_controlling_2015,candes_panning_2018} consists in introducing noisy
copies of the original variables that are independent from the outcome
conditional on the original variables, and comparing the coefficients of the
original variables to those of the knockoff variables.
This approach is particularly attractive for several reasons:
\textit{i)} it is not tied to a given statistical model, but can work instead
for many different multivariate functions, whether linear or not;
\textit{ii)} it requires a good generative model for features, but poses few
conditions for the validity of inference; and
\textit{iii)} it controls the false discovery rate (FDR,
\cite{benjamini_controlling_1995}), a more useful quantity than
multiplicity-corrected error rates.

Unfortunately, KO has a major drawback, related to the random nature of the
knockoff variables: two different draws yield two different solutions, leading
to large, uncontrolled fluctuations in power and false discovery proportion
across experiments (see Figure~\ref{fig:histo-fdp-power} below).
This makes the ensuing inference irreproducible.
An obvious way to fix the problem is to rely on some type of statistical
aggregation, in order to consolidate the inference results.
Such procedures have been introduced by \cite{gimenez_improving_2019} and by 
\cite{emery_controlling_2019}, but they have several limitations: the
computational complexity scales poorly with the number $B$ of bootstraps, while
the power of the method decreases with $B$.
In high-dimensional settings that we target, these methods are thus only
usable with a limited number of bootstraps.

In this work, we explore a different approach, that we call aggregation of
multiple knockoffs (AKO): it rests on a reformulation of the original knockoff
procedure that introduces intermediate p-values.
As it is possible to aggregate such quantities even without assuming
independence \cite{meinshausen_p-values_2009}, we propose to perform
aggregation at this intermediate step.
We first establish the equivalence of AKO with the original knockoff
aggregation procedure in case of one bootstrap (Proposition~\ref{prop:equivalence}).
Then we show that the FDR is also controlled with AKO (Theorem~\ref{thm:fdr-control}).
By construction, AKO is more stable than (vanilla) knockoff; we also
demonstrate empirical benefits in several examples, using simulated data, but
also genetic and brain imaging data.
Note that the added knockoff generation and inference steps are embarrassingly
parallel, making this procedure no more costly than the original KO inference.

\textbf{Notation.} Let \( [p] \) denote the set \( \{1, 2, \dots, p \} \);
for a given set given set \( \cA \),
\( | \cA | \egaldef \mathbf{card}(\cA)\);
matrices are denoted in bold uppercase letter, while vectors in bold
lowercase letter and scalars normal character. 
An exception for this is the vector of knockoff statistic \( \bW \), in which we
follow the notation from the original paper of \cite{barber_controlling_2015}.

%%%%%%%%%%%%%%%%%%%%%%%%%%%%%%%%%%%%%%%%%%%%%%%%%%%%%%%%%%%%%%%%%%%%%%%%%%%%%%%%

\section{Background}

\textbf{Problem Setting.} Let \( \bX \in \bbR^{n \times p} \) be a design
matrix corresponding to \( n \) observations of \( p \) potential explanatory
variables \( \bx_1, \bx_2, \dots, \bx_n \in \bbR^p \), with its target vector
\( \by \in \bbR^n \).
To simplify the exposition, we focus on sparse linear models, as 
\cite{barber_controlling_2015} and \cite{candes_panning_2018}: 
\begin{equation}
  \label{eq:linear}
  \by = \bX \bm\beta^* + \sigma\bm\epsilon
\end{equation}
where \( \bm\beta^* \in \bbR^p \) is the true parameter vector,
\( \sigma \in \bbR^+\) the unknown noise magnitude, \( \bm\epsilon \in \bbR^n\)
some Gaussian noise vector.
%
%A proportion of the variables belongs to the set of null features \( \cH_0 = \{j
%\in [p]: \beta_j^* = 0\} \), or equivalently \( j \in \cH_0 \iff \bx_j \perp \by
% \ | \ \bx_{-j}\).
%
%
Yet, it should be noted that the algorithm does not require linearity or
sparsity.
Our main interest is in finding an estimate \( \widehat{\cS} \) of the true support
set \( \cS = \{j \in [p]: \beta_j^* \neq 0 \} \), or the set of important
features that have an effect on the response.
As a consequence, the complementary of the support \( \cS \), which is 
denoted \( \cS^c = \{j \in [p]: \beta_j^* = 0 \} \), corresponds to null
hypotheses.
Identifying the relevant features amounts to simultaneously testing
\[
  \cH_0^j: \beta_j^* = 0 \quad \text{versus} \quad \cH_{a}^j: \beta_j^* \neq 0,
  \quad \forall j = 1, \dots, p.
\]
Specifically, we want to bound the proportion of false positives among selected
variables, that is, control the false discovery rate (FDR,
\cite{benjamini_controlling_1995}) under certain predefined level~$\alpha$:
\[
  \text{FDR} = \bbE \left[\dfrac{\ | \widehat{\cS} \cap \cS^c |}{| \widehat{\cS} | \vee
      1} \ \right] \leq \alpha \in (0, 1)
\, . 
\]

\textbf{Knockoff Inference.} Introduced originally by
\cite{barber_controlling_2015}, the knockoff filter is a variable selection
method for multivariate models with theoretical control of FDR.
\cite{candes_panning_2018} expanded the method to work in the case of
(mildly) high-dimensional data, with the assumption that \( \bx =
(x_1, \dots, x_p) \sim P_X \) such that \( P_X \) is known.
The first step of this procedure involves sampling extra null variables that
have a correlation structure similar to that of the original variables, with the
following formal definition.
\begin{definition}[Model-X knockoffs, \cite{candes_panning_2018}]
  \label{defn:knockoff}
  The model-X knockoffs for the family of random variables
  \( \bx = (x_1, \dots, x_p) \) are a new family of random variables
  \( \tilde{\bx} = (\tilde{x}_1, \dots, \tilde{x}_p) \) constructed to satisfy
  the two properties\textup{:}
\begin{enumerate}
\item For any subset \( \cK \subset \discset{1, \dots, p} \), 
  \((\bx, \tilde{\bx})_{\text{swap}(\cK)} \stackrel{d}{=} (\bx, \tilde{\bx}) \),
  where the vector \( (\bx, \tilde{\bx})_{\text{swap}(\cK)} \) denotes the swap
  of entries \( x_j \) and \( \tilde{x}_j \) for all \( j \in \cK \),
  and \( \stackrel{d}{=} \) denotes equality in distribution. 
\item \( \tilde{\bx} \independent \by \mid \bx \, \).
\end{enumerate}
\end{definition}
A test statistic is then calculated to measure the strength of the original
variables versus their knockoff counterpart.
We call this the knockoff statistic \( \bW = \{W_j \}_{j=1}^{p} \),
that must fulfill two important properties.
\begin{definition}[Knockoff statistic, \cite{candes_panning_2018}]
  \label{defn:knockoff-statistic}
  A knockoff statistic \( \bW = \{W_{j}\}_{j \in [p]} \) is a measure of feature
  importance that satisfies the two following properties\textup{:}
  \begin{enumerate}
  \item It depends only on \( \bX, \tilde{\bX} \) and \( \by \)
    \[
      \bW = f (\bX, \tilde{\bX}, \by) 
      \, . 
    \]
  \item Swapping the original variable column \( \bx_j \) and its knockoff column
    \( \tilde{\bx}_j \) switches the sign of \( W_j \)\textup{:} 
    % iff \( j \) is in the support set \( \cS \):
    % Bertrand: swapping variables always switches the sign ...
    \begin{equation*}
      W_j([\bX, \tilde{\bX}]_{swap(S)}, y) =
      \left\{
          \begin{array}{ll}
            W_j([\bX, \tilde{\bX}], \by) \ \text{if } j \in \cS^c \\
            -W_j([\bX, \tilde{\bX}], y) \ \text{if } j \in \cS \, . \\
          \end{array}
      \right.
    \end{equation*}
  \end{enumerate}
\end{definition}
Following previous works on the analysis of the knockoff properties
\cite{arias-castro_distribution_2017,rabinovich_optimal_2020}, we make
the following assumption about the knockoff statistic.
This is necessary for our analysis of knockoff aggregation scheme later on.
\begin{assumption}[Null distribution of knockoff statistic]
  \label{assumption:ko-stat-dist} 
  The knockoff statistic defined in
  Definition~\ref{defn:knockoff-statistic} are such that 
  \( \{W_j\}_{j \in \cS^c}, \) are
  independent and follow the same distribution \( \bbP_0 \).
\end{assumption}
\begin{remark}
\label{remark:P0-symmetric}
As a consequence of \cite[Lemma 2]{candes_panning_2018} regarding the signs of
the null \( W_j \) as i.i.d. coin flips, if
Assumption~\ref{assumption:ko-stat-dist} holds true, then $\bbP_0$ is symmetric
around zero.
\end{remark}
One such example of knockoff statistic is the Lasso-coefficient difference
(LCD).
The LCD statistic is computed by first making the concatenation of original
variable and knockoff variables \( [\bX, \tilde{\bX}] \in \bbR^{n \times 2p}\),
then solving the Lasso problem \cite{tibshirani_regression_1996}:
\begin{equation}
  \label{eq:lasso-problem}
  \widehat{\bm\beta} = \argmin_{\bm\beta \in \bbR^{2p}} \left\{ \dfrac{1}{2}
  \norm{\by - [\bX, \tilde{\bX}] \bm\beta}_2^2 + \lambda \norm{\bm\beta}_1 \right\} 
\end{equation}
with \( \lambda \in \bbR \) the regularization parameter, and finally to  take:
\begin{equation}
  \label{eq:lcd}
\forall j \in [p] \, , \qquad 
   W_j = |\widehat{\beta}_j| - |\widehat{\beta}_{j + p}| \, .
\end{equation}
This quantity measures how strong the coefficient magnitude of each original
covariate is against its knockoff, hence the name Lasso-coefficient
difference. Clearly, the LCD statistic satisfies the two properties stated in
Definition~\ref{defn:knockoff-statistic}.

Finally, a threshold for controlling the FDR under given level
\( \alpha \in (0, 1) \) is calculated:
\begin{equation}
  \label{eq:ko-threshold}
  \tau_+ = \min \discset{t > 0: \dfrac{1 + \# \{j : W_j \leq -t \}}{\# \{j: W_j
      \geq t \} \vee 1 } \leq \alpha},
\end{equation}
and the set of selected variables is 
\( \widehat{\cS} = \{j \in [p] : W_j \geq \tau_+ \} \).
% \begin{remark}
%   A slightly modified version of the threshold without the offset additional
%   \( +1 \) in the numerator of the fraction in Eq.~\refp{eq:ko-threshold} was
%   also introduced in \cite{barber_controlling_2015} and
%   \cite{candes_panning_2018}:
%   \[
%     \tau = \min \discset{t > 0: \dfrac{\# \{j: W_j \leq -t \}}{\# \{j: W_j
%         \geq t \} \vee 1 } \leq \alpha}.
%   \]
%   However this threshold cannot strictly control FDR under level \( \alpha \in
%   (0, 1) \).
%   %
%   Henceforth, we will use KO as an abbreviation for the knockoff+ procedure.
% \end{remark}

%
\textbf{Instability in Inference Results.} Knockoff inference is a flexible
method for multivariate inference in the sense that it can use different
loss functions (least squares, logistic, etc.), and use different variable
importance statistics.
However, a major drawback of the method comes from the random nature of the
knockoff variables \( \tilde{\bX} \) obtained by sampling: different draws yield
different solutions (see Figure~\ref{fig:histo-fdp-power} in Section~\ref{ssec:synthetic-experiment}).
This is a major issue in practical settings, where knockoff-based
inference is used to prove the conditional association between
features and outcome.

%%%%%%%%%%%%%%%%%%%%%%%%%%%%%%%%%%%%%%%%%%%%%%%%%%%%%%%%%%%%%%%%%%%%%%%%%%%%%%%%

\section{Aggregation of Multiple Knockoffs}

\subsection{Algorithm Description}

One of the key factors that lead to the extension of the original (vanilla) knockoff filter stems from
the observation that knockoff inference can be formulated based on the following
quantity.
\begin{definition}[Intermediate p-value]
  Let \( \bW = \{W_{j}\}_{j \in [p]} \) be a knockoff statistic
  according to Definition~\ref{defn:knockoff-statistic}. For \( j = 1, \dots, p \), the
  intermediate p-value \( \pi_j \) is defined as\textup{:}
\begin{equation}\label{eq:pval}
  \pi_j = %
    \begin{cases}
      \dfrac{1 + \# \{k: W_k \leq -W_j \}}{p} \quad \text{if} \quad W_j > 0\\
      1 \quad \text{if} \quad W_j \leq 0 
      \, . 
    \end{cases}
\end{equation}
\label{defn:pval}
\end{definition}

We first compute \( B \) draws of knockoff variables, and then
knockoff statistics.
Using Eq.~\Cref{eq:pval}, we derive the corresponding empirical p-values
\(\pi_j^{(b)} \), for all $j \in [p]$ and $b \in [B]$.
Then, we aggregate them for each variable \( j \) in parallel, using the
quantile aggregation procedure introduced by \cite{meinshausen_p-values_2009}:
\begin{equation}
  \label{eq:q-agg}
  \bar{\pi}_j = \min \left\{1, \dfrac{q_{\gamma} \bigl( \{ \pi_j^{(b)} : b \in [B] \} \bigr)}{\gamma} \right\}
\end{equation}
where \( q_{\gamma}(\cdot) \) is the \( \gamma\)-quantile function.
In the experiments, we fix \( \gamma=0.3 \) and \( B=25 \).
The selection of these default values is explained more thoroughly in Section
\ref{ssec:synthetic-experiment}.

Finally, with a sequence of aggregated p-values
\( \bar{\pi}_1, \dots, \bar{\pi}_p \), we use Benjamini-Hochberg step-up
procedure (BH, \cite{benjamini_controlling_1995}) to control the FDR.
\begin{definition}[BH step-up, \cite{benjamini_controlling_1995}]
  \label{def:bh-step-up}
  Given a list of p-values \( \bar{\pi}_1, \dots, \bar{\pi}_p \) and predefined
  FDR control level \( \alpha \in (0, 1) \), the Benjamini-Hochberg step-up
  procedure comprises three steps\textup{:}
\begin{enumerate}
\item Order p-values such that:
  \( \bar{\pi}_{(1)} \leq \bar{\pi}_{(2)} \leq \dots \leq \bar{\pi}_{(p)} \).
\item Find:
  \begin{equation}\label{eq:pval-threshold}
    \widehat{k}_{BH} = \max \left\{ k: \bar{\pi}_{(k)} \leq \dfrac{k\alpha}{p} \right\} 
    \, . 
  \end{equation}  
\item Select
  \( \widehat{\cS} = \{j \in [p]: \bar{\pi}_{(j)} \leq \bar{\pi}_{(\widehat{k}_{BH})} \}
  \). 
\end{enumerate}
\end{definition}
This procedure controls the FDR, but only under independence or
positive-dependence between p-values \cite{benjamini_control_2001}.
As a matter of fact, for a strong guarantee of FDR control, one can consider
instead a threshold yielding a theoretical control of FDR under arbitrary
dependence, such as the one of \cite{benjamini_control_2001}.
We call BY step-up the resulting procedure.
Yet we use BH step-up procedure in the experiments of Section
\ref{sec:experiments}, as we observe empirically that the aggregated p-values
\( \bar{\pi}_j \) defined in Eq.~\eqref{eq:pval} does not deviate significantly
from independence (details in Appendix).
\begin{definition}[BY step-up, \cite{benjamini_control_2001}]
  \label{def:BY-step-up}
  Given an ordered list of p-values as in step 1 of BH step-up
  \( \bar{\pi}_{(1)} \leq \bar{\pi}_{(2)} \leq \cdots \leq \bar{\pi}_{(p)} \)
  and predefined level \( \alpha \in (0, 1) \), the Benjamini-Yekutieli step-up
  procedure first finds\textup{:}
  \begin{equation}\label{eq:pval-threshold-BY}
    \widehat{k}_{BY} = \max \left\{ k \in [p] : \bar{\pi}_{(k)}
      \leq \dfrac{k\beta(p)\alpha}{p} \right\},
  \end{equation}
  with \( \beta(p) = (\sum_{i=1}^p 1 / i)^{-1} \), 
  and then selects 
  \[ \widehat{\cS} = \bigl\{ j \in [p]: \bar{\pi}_{(j)} \leq \bar{\pi}_{(\widehat{k}_{BY})} \bigr\}
  \, .  \]
\end{definition}
\cite{blanchard_adaptive_2009} later on introduced a general function form for
\( \beta(p) \) to make BY step-up more flexible.
However, because we always have \( \beta(p) \leq 1 \), this procedure leads
to a smaller threshold than BH step-up, thus being more conservative.

\begin{algorithm}[!]
\caption{AKO -- Aggregation of multiple knockoffs}
\label{alg:ako}
\begin{algorithmic}
\STATE {\bfseries Input:} \( \bX \in \bbR^{n \times p}\), \( \by \in \bbR^{n} \),
\( B \) -- number of bootstraps ;  \( \alpha \in (0, 1) \) -- target FDR level \\
\STATE {\bfseries Output:} \( \widehat{S}_{AKO} \) -- Set of selected variables
index

\vspace{0.5em}

\FOR{\(b=1\) \textbf{to} \(B\)}
  \STATE \( \tilde{\bX}^{(b)} \gets \textsc{sampling\_knockoff}(\bX) \)
  \STATE \( \bW^{(b)} \gets \textsc{knockoff\_statistic}(\bX, \tilde{\bX}^{(b)}, \by) \)

  \STATE \( \bm\pi^{(b)} \gets \textsc{convert\_statistic}(\bW^{(b)}) \)
  \Comment{Using Eq.~\refp{eq:pval}}  
\ENDFOR

\vspace{0.5em}

\FOR{\( j=1 \) \textbf{to} \( p \) }
\STATE \( \bar{\pi}_j \gets
\textsc{quantile\_aggregation}\left(\{\pi_j^{(b)}\}^B_{b=1}\right) \)
\Comment{Using Eq.~\refp{eq:q-agg}}
\ENDFOR

\vspace{0.5em}

\STATE \( \widehat{k} \gets
  \textsc{fdr\_threshold}(\alpha, \left(\bar{\pi}_1, \bar{\pi}_2, \dots,
  \bar{\pi}_p) \right) \)
  \Comment{Using either Eq. \refp{eq:pval-threshold} or
    Eq.~\refp{eq:pval-threshold-BY}}
  
\vspace{0.5em}
  
\STATE \textbf{Return:} \( \widehat{S}_{AKO} \gets \discset{j \in [p]: \bar{\pi}_j \leq
    \bar{\pi}_{\widehat{k}} } \)

\end{algorithmic}
\end{algorithm}

The AKO procedure is summarized in Algorithm \ref{alg:ako}.
We show in the next section that with the introduction of the aggregation step,
the procedure offers a guarantee on FDR control under mild hypotheses.
Additionally, the numerical experiments of Section \ref{sec:experiments}
illustrate that aggregation of multiple knockoffs indeed improves the stability
of the knockoff filter, while bringing significant statistical power gains.

\subsection{Related Work}
\label{ssec:related-works}

To our knowledge, up until now there have been few attempts to stabilize
knockoff inference.
Earlier work of \cite{su_communication_2015} rests on the same idea of
generating multiple knockoff bootstrap as ours, but relies on the linear
combination of the so-called \emph{one-bit p-values} (introduced as a means to
prove the FDR control in original knockoff work of
\cite{barber_controlling_2015}).
As such, the method is less flexible since it requires a specific type of
knockoff statistic to work.
Furthermore, it is unclear how this method would perform in high-dimensional
settings, as it was only demonstrated in the case of \( n > p \).
More recently, the work of \cite{holden_multiple_2018} incorporates directly
multiple bootstraps of knockoff statistics for FDR thresholding without the
need of p-value conversion.
Despite its simplicity and convenience as a way of aggregating knockoffs, our
simulation study in Section~\ref{ssec:synthetic-experiment} demonstrates that
this method somehow fails to control FDR in several settings.

In a different direction, \cite{gimenez_improving_2019} and
\cite{emery_controlling_2019} have introduced \emph{simultaneous knockoff}
procedure, with the idea of sampling several knockoff copies at the same time
instead of doing the process in parallel as in our work.
This, however, induces a prohibitive computational cost when the number of
bootstraps increases, as opposed to the AKO algorithm that can use parallel
computing to sample multiple bootstraps at the same time. 
In theory, on top of the fact that sampling knockoffs has cubic complexity on
runtime with regards to number of variables \( p \) (requires covariance matrix
inversion), simultaneous knockoff runtime is of \( \cO(B^3 p^3) \), while for
AKO, runtime is only of \( \cO(Bp^3) \) and \( \cO(p^3) \) with parallel
computing.
Moreover, the FDR threshold of simultaneous knockoff is calculated in such a way
that it loses statistical power as the number of bootstraps increases, when the
sampling scheme of vanilla knockoff by \cite{barber_controlling_2015} is used.
We have set up additional experiments in the Appendix to illustrate this
phenomenon.
In addition, the threshold introduced by \cite{emery_controlling_2019} is only
proven to have a theoretical control of FDR in the case where $n > p$.

%% Appendix \ref{sup-ako-vs-sko}

%%%%%%%%%%%%%%%%%%%%%%%%%%%%%%%%%%%%%%%%%%%%%%%%%%%%%%%%%%%%%%%%%%%%%%%%%%%%%%%%
\section{Theoretical Results}
\label{sec:results}

We now state our theoretical results about the AKO procedure.
\subsection{Equivalence of Aggregated Knockoff with Single Bootstrap}
  % ($\mathbf{B=1, \bm\gamma=1}$) and Vanilla Knockoff}
%
First, when $B=1$ and $\gamma=1$, we show that AKO+BH is equivalent to vanilla
knockoff.
\begin{proposition}[Proof in Appendix~\ref{ssec:proof-prop-equiv}]
  \label{prop:equivalence}
  Assume that for all \( j, j' = 1, \dots, p \), 
  \[
    \bbP (W_j = W_{j'}, \quad W_j \neq 0, \quad W_{j'} \neq 0) = 0
  \]
  that is, non-zero LCD statistics are distinct with probability~1. 
  Then, single bootstrap version of aggregation of multiple knockoffs \textup{(}$B=1$\textup{)}, using
  \( \gamma=1 \) and BH step-up procedure in Definition~\ref{def:bh-step-up} for
  calculating FDR threshold, is equivalent to the original knockoff inference by
  \cite{barber_controlling_2015}.
\end{proposition}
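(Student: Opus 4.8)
The plan is to strip away the aggregation step, turn the claim into a purely deterministic identity between the intermediate p-values and the knockoff statistics, and then match the Benjamini--Hochberg threshold to $\tau_+$ defined in \eqref{eq:ko-threshold}. First I would dispose of the aggregation: with $B=1$ and $\gamma=1$ the $\gamma$-quantile of the singleton $\{\pi_j^{(1)}\}$ is $\pi_j^{(1)}$ itself, so \eqref{eq:q-agg} collapses to $\bar{\pi}_j=\min\{1,\pi_j\}=\pi_j$, the minimum being inactive because $\pi_j\le 1$ by \eqref{eq:pval}. Hence AKO+BH is just the BH step-up of Definition~\ref{def:bh-step-up} applied to the raw $\pi_1,\dots,\pi_p$, and everything reduces to showing that this selects exactly $\{j:W_j\ge\tau_+\}$.

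Next I would set up the bookkeeping enabled by the distinctness hypothesis. Let $W_{(1)}>W_{(2)}>\cdots>W_{(m)}>0$ be the strictly ordered positive statistics (strict a.s.\ by assumption), set $W_{(m+1)}=0$, and write $V(t)=\#\{k:W_k\le -t\}$ and $R(t)=\#\{k:W_k\ge t\}$. Distinctness gives the key identity $R(W_{(r)})=r$. By \eqref{eq:pval}, the variable carrying $W_{(r)}$ has p-value $(1+V(W_{(r)}))/p$, and since $V(W_{(r)})$ is nondecreasing in $r$ (as $-W_{(r)}$ increases), these p-values are nondecreasing in $r$; the remaining $p-m$ variables, those with $W_j\le 0$, all have p-value $1$. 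Consequently the $r$-th smallest p-value overall is $\pi_{(r)}=(1+V(W_{(r)}))/p$ for $r\le m$.

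The central computation is then immediate: the BH inclusion condition $\pi_{(r)}\le r\alpha/p$, after multiplying through by $p$ and using $R(W_{(r)})=r$, reads exactly $\frac{1+V(W_{(r)})}{R(W_{(r)})}\le\alpha$, i.e.\ the knockoff criterion in \eqref{eq:ko-threshold} evaluated at $t=W_{(r)}$. Since a p-value-$1$ variable can be retained only if $1\le k\alpha/p$, which forces $k\ge p/\alpha>p$ and is impossible, the threshold index satisfies $\widehat{k}_{BH}=\max\{r\le m:(1+V(W_{(r)}))/r\le\alpha\}$, and the BH-selected set is $\{j:W_j\ge W_{(\widehat{k}_{BH})}\}$.

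Finally I would match this to $\tau_+$, which is where I expect the main obstacle to lie: $\tau_+$ is an infimum over a continuum of thresholds $t$, whereas BH is a maximum over discrete ranks, and the numerator $V(t)$ is \emph{not} constant across a gap $(W_{(r+1)},W_{(r)}]$. The crux is that on such a gap $R(t)=r$ is constant while $V(t)$ is minimized at the upper endpoint $t=W_{(r)}$; hence a threshold yielding a selection of size $r$ with the ratio of \eqref{eq:ko-threshold} at most $\alpha$ exists if and only if $(1+V(W_{(r)}))/r\le\alpha$. The selection induced by $\tau_+$ depends only on which gap contains it, and minimizing $t$ is the same as maximizing the attainable selection size, so the knockoff procedure selects the top $r^\ast$ positive statistics with $r^\ast=\max\{r:(1+V(W_{(r)}))/r\le\alpha\}$. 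This is precisely $\widehat{k}_{BH}$, and since both sets equal $\{j:W_j\ge W_{(r^\ast)}\}$ (with the empty-selection conventions agreeing when no valid $r$ exists), the two procedures coincide. The only genuinely delicate point to write carefully is this reorganization of ``$\min$ over $t$'' into ``$\max$ over $k$'' together with the verification that the best achievable ratio within a gap is attained at its upper endpoint.
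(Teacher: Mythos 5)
Your proof is correct and follows essentially the same route as the paper's: collapse the aggregation step for $B=1$, $\gamma=1$, identify ascending p-value order with descending $W$ order, rewrite the BH condition $\pi_{(k)} \le k\alpha/p$ as the knockoff ratio condition at $t = W_{(k)}$ (using distinctness to get $\#\{i : W_{(i)} \ge W_{(k)}\} = k$), and convert the maximum over ranks into the minimum over thresholds. The only place you are more careful than the paper is the final step: the paper passes from the discrete candidate thresholds $\{W_{(k)}\}$ to the continuous minimum over $t>0$ with a bare ``without loss of generality,'' whereas you justify it by observing that on each interval $(W_{(r+1)}, W_{(r)}]$ the denominator $\#\{i : W_i \ge t\}$ is constant while the numerator $1+\#\{i : W_i \le -t\}$ is minimized at the upper endpoint, so a valid threshold exists in that interval if and only if $W_{(r)}$ itself is valid --- and the selected set depends only on which interval contains the threshold.
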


\begin{remark}
  Although Proposition~\ref{prop:equivalence} relies on the assumption of distinction
  between non-zero \( W_j \)s for all \( j = 1, \dots, p \), the following lemma
  establishes that this assumption holds true with probability one for the LCD statistic up to further
  assumptions.% (found in Appendix \ref{sup-ko-distinct}).
\end{remark}

\begin{lemma}[Proof in Appendix~\ref{ko-distinct}]
  \label{lemma:ko-distinct}
  Define the equi-correlation set as\textup{:}
  \[
    \widehat{J}_{\lambda} = \discset{j \in [p]: \bx_j^{\top} (\by - \bX
      \widehat{\bm\beta}) = \lambda/ 2 }
  \]
  with \( \widehat{\bm\beta}, \lambda \) defined in Eq.~\refp{eq:lasso-problem}.
  Then we have\textup{:}
  \begin{equation}
    \label{eq:ko-distinct}
    \bbP\left( W_j = W_{j'}, W_j \neq 0, W_{j'} \neq 0, \
      \rank(X_{\widehat{J}_\lambda}) = |\widehat{J}_\lambda| \right) = 0 
  \end{equation}
  for all \( j, j' \in [p]: j \neq j' \).
  In other words, assuming \( \bX_{\widehat{J}_{\lambda}} \) is full rank, then the
  event that LCD statistic defined in Eq.~\refp{eq:lcd} is distinct for all
  non-zero value happens almost surely.
  
\end{lemma}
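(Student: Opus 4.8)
The plan is to condition on the (finitely many) equi-correlation configurations of the Lasso fit and, on each one, reduce the event $\{W_j = W_{j'}\}$ to the avoidance of a fixed hyperplane by the random vector $\by$; since $\by$ has a Lebesgue density, each such hyperplane carries probability zero. First I would record the closed form of the solution on the full-rank event. Writing $\mathbf{Z} \egaldef [\bX, \tilde{\bX}] \in \bbR^{n \times 2p}$ and letting $E$ denote the equi-correlation set of the concatenated Lasso of Eq.~\eqref{eq:lasso-problem} with associated sign vector $s \in \{-1,+1\}^{|E|}$ from the KKT conditions, the hypothesis $\rank(\bX_{\widehat{J}_\lambda}) = |\widehat{J}_\lambda|$ amounts to $\mathbf{Z}_E$ having full column rank. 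By the standard characterization of Lasso solutions, the solution is then unique, supported on $E$, and equal to $\widehat{\bm\beta}_E = (\mathbf{Z}_E^\top \mathbf{Z}_E)^{-1}(\mathbf{Z}_E^\top \by - \lambda s)$ with $\widehat{\beta}_k = 0$ for $k \notin E$. In particular, for every $k \in E$ the coefficient $\widehat{\beta}_k = m_k^\top \by + c_k$ is affine in $\by$, whose linear part $m_k$ is the corresponding row of the pseudoinverse $\mathbf{Z}_E^{+} = (\mathbf{Z}_E^\top\mathbf{Z}_E)^{-1}\mathbf{Z}_E^\top$; since $\mathbf{Z}_E$ has full column rank, the family $\{m_k\}_{k \in E}$ is linearly independent.

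Next I would make the LCD statistic affine in $\by$. Because the sign vector is fixed on this configuration, $|\widehat{\beta}_k| = s_k \widehat{\beta}_k$ for every $k \in E$ (the identity holds trivially when $\widehat{\beta}_k = 0$), so $W_j = |\widehat{\beta}_j| - |\widehat{\beta}_{j+p}| = a_j^\top \by + b_j$ is affine in $\by$, where, in the independent family $\{m_k\}$, the linear part $a_j$ is supported on $\{j, j+p\} \cap E$ with nonzero coefficients $s_j$ and $-s_{j+p}$; likewise $W_{j'} = a_{j'}^\top\by + b_{j'}$ with $a_{j'}$ supported on $\{j', j'+p\} \cap E$. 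The event $\{W_j = W_{j'}\}$ then reads $(a_j - a_{j'})^\top \by = b_{j'} - b_j$, which defines a proper affine subspace of $\bbR^n$ as soon as $a_j \neq a_{j'}$.

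The crux, and the step I expect to be the main obstacle, is showing $a_j \neq a_{j'}$ on the event $\{W_j \neq 0,\, W_{j'} \neq 0\}$. Since $j \neq j'$ with $j, j' \in [p]$, the index sets $\{j, j+p\}$ and $\{j', j'+p\}$ are disjoint, so $a_j$ and $a_{j'}$ have disjoint supports in the linearly independent family $\{m_k\}$ and cannot cancel against one another. Moreover $W_j \neq 0$ forces at least one of $\widehat{\beta}_j, \widehat{\beta}_{j+p}$ to be nonzero, hence at least one of $j, j+p$ to lie in $E$, so $a_j \neq 0$; consequently $a_j - a_{j'} \neq 0$. This degenerate case is exactly what the nonvanishing hypotheses $W_j \neq 0$ and $W_{j'} \neq 0$ are there to exclude, and ruling it out cleanly via the disjoint-support argument is the heart of the proof.

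Finally I would close the measure-theoretic loop. Conditionally on $(\bX, \tilde{\bX})$, the quantities $a_j, a_{j'}, b_j, b_{j'}$ are fixed, while $\by = \bX\bm\beta^* + \sigma\bm\epsilon$ has a Gaussian density on $\bbR^n$ for $\sigma > 0$; hence the proper affine subspace $\{(a_j - a_{j'})^\top\by = b_{j'}-b_j\}$ has conditional probability zero, and integrating over the design keeps it zero. Because there are only finitely many configurations $(E, s)$, a union bound over them yields Eq.~\eqref{eq:ko-distinct} for the fixed pair $(j, j')$, and a further union over the $\binom{p}{2}$ pairs gives almost-sure distinctness of all nonzero LCD statistics.
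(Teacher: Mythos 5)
Your proposal is correct and follows essentially the same route as the paper's proof: both decompose the event over the finitely many equicorrelation/sign configurations of the concatenated Lasso, use the full-rank hypothesis to write the solution (hence $W_j - W_{j'}$) as an affine function of the Gaussian noise with nonzero linear part, and conclude that a nondegenerate Gaussian hits each resulting hyperplane with probability zero, finishing by finite union bounds. The only organizational difference is that the paper abstracts this into a general statement about linear functionals $\bm\alpha^{\top}\widehat{\bm\beta}$ (Result~\ref{res:le.W-distinct}) and then specializes to the LCD statistic, whereas you inline that argument for the specific vectors involved; your disjoint-support/linear-independence step is exactly the paper's observation that $M(J)^{\top}\bm\alpha_{J} \neq 0$ whenever $\bm\alpha_{J} \neq 0$ and $M(J)$ has full rank.
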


\subsection{Validity of Intermediate P-values}
Second, the fact that the $\pi_j$ are called ``intermediate p-values'' is
justified by the following lemma.
\begin{lemma}
  \label{lemma:pval}
  If Assumption~\ref{assumption:ko-stat-dist} holds true, and if
  $\lvert \cS^c \rvert \geq 2$, then, for all \( j \in \cS^c \), the
  intermediate p-value \( \pi_j \) defined by Eq.~\refp{eq:pval}
  satisfies\textup{:}
  \begin{equation*}
    \forall t \in [0,1] \qquad \bbP(\pi_j \leq t)
    \leq \frac{\kappa p}{\lvert \cS^c \rvert} t
  \end{equation*}
  %
  % \vspace{-10mm}
  where $\kappa = \dfrac{\sqrt{22}-2}{7 \sqrt{22}-32} \leq 3.24$. 
\end{lemma}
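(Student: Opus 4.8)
The plan is to fix an arbitrary null index $j \in \cS^c$ and bound $\bbP(\pi_j \le t)$ by exploiting the i.i.d.\ symmetric structure of the null statistics granted by Assumption~\ref{assumption:ko-stat-dist} and Remark~\ref{remark:P0-symmetric}. The first observation is that $\pi_j < 1$ forces $W_j > 0$, so for $t < 1$ the event $\{\pi_j \le t\}$ equals $\{W_j > 0,\ \#\{k \in [p]: W_k \le -W_j\} \le tp - 1\}$. Since discarding the non-null indices only decreases the count, this event is contained in $\{W_j > 0,\ \#\{k \in \cS^c : W_k \le -W_j\} \le tp-1\}$; writing $m = \lvert \cS^c\rvert$, it therefore suffices to control the ``null-only'' analogue of $\pi_j$. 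This restriction to null indices, together with the $1/m$ coming from averaging over the null ranks below, is what ultimately produces the ratio $p/\lvert\cS^c\rvert$.

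Next I would decompose each null statistic as $W_k = s_k V_k$ with $V_k = \lvert W_k\rvert$ and $s_k = \mathrm{sign}(W_k)$; by symmetry of $\bbP_0$ the signs $(s_k)_{k \in \cS^c}$ are i.i.d.\ fair coin flips, independent of the absolute values $(V_k)_{k \in \cS^c}$. When $W_j > 0$ one has $\{k \in \cS^c : W_k \le -W_j\} = \{k \in \cS^c : s_k = -1,\ V_k \ge V_j\}$. Conditioning on the absolute values, let $r$ be the rank of $V_j$ among the $m$ null absolute values in decreasing order; by exchangeability this rank is uniform on $\{1,\dots,m\}$, and given the rank the number of strictly larger null statistics carrying a negative sign is $\mathrm{Bin}(r-1,1/2)$, independent of $s_j$. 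Averaging over the rank and over $\{s_j = +1\}$ yields
\begin{equation*}
  \bbP(\pi_j \le t) \le \frac{1}{2m}\sum_{r=0}^{m-1}\bbP\bigl(\mathrm{Bin}(r,1/2) \le tp - 1\bigr),
\end{equation*}
valid up to a null set of ties, which can be removed by a continuity/perturbation argument.

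The crux is then to bound this sum of binomial lower-tail probabilities by a multiple of $tp$. One slick route is a waiting-time identity: $\mathrm{Bin}(r,1/2) \le c$ is the event that the $(\lfloor c\rfloor + 1)$-th head in a fair coin sequence occurs after flip $r$, so $\sum_{r\ge 0}\bbP(\mathrm{Bin}(r,1/2)\le c)$ equals the mean of a negative-binomial waiting time, namely $2(\lfloor c\rfloor+1) \le 2tp$ for $c = tp-1$; this already gives $\bbP(\pi_j\le t) \le \tfrac{p}{m}\,t$, which implies the statement since $\kappa \ge 1$. To reproduce the explicit constant $\kappa = \frac{\sqrt{22}-2}{7\sqrt{22}-32}$ one instead splits the sum at a threshold $r^\star \propto tp$, bounds the small-$r$ terms by $1$ and the large-$r$ terms by a Chernoff estimate of the binomial lower tail, and optimizes over $r^\star$; the quadratic arising in this optimization is what produces the $\sqrt{22}$.

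The main obstacle is this last, quantitative step: the reduction to a binomial tail sum is essentially bookkeeping, but pinning down an explicit admissible constant (and in particular the specific $\kappa$) requires a careful choice of the split point and a tight tail inequality, and it is here that the hypothesis $\lvert\cS^c\rvert \ge 2$ enters, ensuring the binomial terms and the optimization are non-degenerate. A secondary, more technical nuisance is the treatment of ties: when $\bbP_0$ has atoms, $W_k = -W_j$ can occur with positive probability, but such ties only inflate the count $\#\{k : W_k \le -W_j\}$ and hence shrink $\bbP(\pi_j\le t)$, so they can be absorbed into the upper bound provided one checks that the uniform-rank and coin-flip reductions remain valid.
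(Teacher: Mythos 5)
Your proof is correct in substance and takes a genuinely different route from the paper's. The paper lower-bounds $\pi_j$ by the empirical c.d.f.\ of the other null statistics, transports everything to uniform variables via the quantile transform $F_0^{-1}$, and then integrates a Bernstein bound for the $\mathrm{Bin}(m,u)$ lower tail over $u \in [0,1]$; optimizing the split parameter $\lambda$ in that estimate is what produces $\kappa = (\sqrt{22}-2)/(7\sqrt{22}-32)$. You instead condition on the magnitudes, invoke the sign-flip symmetry (signs of nonzero nulls are i.i.d.\ fair coins independent of the magnitudes), reduce to $\frac{1}{2m}\sum_{r=0}^{m-1}\bbP\bigl(\mathrm{Bin}(r,1/2)\le tp-1\bigr)$, and evaluate this sum via the negative-binomial waiting-time identity, obtaining $\bbP(\pi_j\le t)\le tp/\lvert \cS^c\rvert$. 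This is both more elementary and strictly stronger than the paper's bound: constant $1$ instead of $\kappa \le 3.24$, and denominator $\lvert \cS^c\rvert$ rather than the $\lvert \cS^c\rvert - 1$ that the paper's computation actually yields. Since $\kappa \ge 1$, it implies the lemma, and your closing sketch of how to re-derive the exact $\kappa$ by a Chernoff split is unnecessary (and is in any case not how the paper gets it).

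The one genuine gap is the treatment of ties, which you defer to ``a continuity/perturbation argument.'' When $\bbP_0$ has atoms, your claim that the rank of $V_j$ among the null magnitudes is uniform on $\{1,\dots,m\}$ fails: for instance, if $\bbP_0$ is supported on $\{\pm a\}$, the strict rank $1+\#\{k\ne j : V_k > V_j\}$ equals $1$ almost surely, and your intermediate bound degenerates to $1/2$ whenever $tp\ge 1$. The repair is not a perturbation but randomized tie-breaking: order the null indices by magnitude, breaking ties with independent uniform labels. This order is an exchangeable permutation of the null indices (so the position $R_j$ of $j$ is exactly uniform), it is independent of the signs, every $k$ preceding $j$ satisfies $V_k \ge V_j$ (so the count you need stochastically dominates $\mathrm{Bin}(R_j-1,1/2)$), and the rest of your argument goes through verbatim. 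With that replacement your proof is complete; note that the paper sidesteps the same issue differently, through the inequality $\widehat{G}_{m}(u) \le \widetilde{F}_{m}\bigl(F_0^{-1}(u)\bigr)$, which holds for arbitrary, possibly atomic, $F_0$.
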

\begin{proof}
  \label{proof-lemma:pval}
  The result holds when \( t \geq 1 \) since
  $\kappa p \geq p \geq \lvert \cS^c \rvert$ and a probability is always
  smaller than~$1$.
  Let us now focus on the case where \( t \in [0, 1) \), and define
  $m = \lvert \cS^c \rvert - 1 \geq 1$ by assumption.
  Let \( F_0 \) denote the c.d.f. of \( \bbP_0 \), the common distribution of
  the null statistics \(\discset{W_k}_{k \in \cS^c} \), which exists by
  Assumption~\ref{assumption:ko-stat-dist}.
  Let \( j \in \cS^c \) be fixed.
  By definition of \( \pi_j \), when \( W_j > 0 \) we have:
  \begin{align*}
    \pi_j &= \dfrac{1 + \#\{k \in [p]: W_k \leq -W_j\}}{p}
    \\ &= \dfrac{1 + \#\{k \in \cS: W_k \leq -W_j\}}{p}
         + \dfrac{\#\{k \in \cS^c\setminus\{ j \}: W_k \leq -W_j\}}{p}
    \\ & \qquad \text{(since \( W_j > 0 > - W_j \))} 
    \\ &\geq \frac{m}{p} \widehat{F}_{m} (-W_j) + \frac{1}{p}
           \numberthis \label{eq:pr.le.p-value.non-asympt.minor-pi-j} 
  \end{align*}
  where $\forall u \in \bbR$,
  $\widehat{F}_{m} (u) \egaldef \dfrac{\#\{ k \in \cS^c\setminus\{ j \}: W_k
    \leq u \} }{m}$ is the empirical cdf of
  \( \{ W_k\}_{k \in \setminus \{j\}} \).
  Therefore, for every $t \in [0,1)$,
  \begin{align*}
    \bbP(\pi_j \leq t)
    &= \bbP(\pi_j \leq t \text{ and } W_j > 0 )
      + \underbrace{\bbP(\pi_j \leq t \ \text{and} \ W_j \leq 0) }_{=0 
      \text{ since } \pi_j = 1 \text{ when } W_j \leq 0}  
    \\ &=  \bbE \bigl[ \bbP( \pi_j \leq t \mid W_j) \mathbbm{1}_{W_j > 0 } \bigr] 
    \\ &\leq \bbE \left[ \bbP \left(  \frac{m}{p} \widehat{F}_{m} (-W_j)
         + \frac{1}{p} \leq t \mid W_j \right) \mathbbm{1}_{W_j > 0 } \right] 
         \ \text{by ~\eqref{eq:pr.le.p-value.non-asympt.minor-pi-j}} 
    \\ 
    &\leq \bbP \left(  \frac{m}{p} \widehat{F}_{m} (-W_j)    + \frac{1}{p} \leq t \right) 
      \, . 
      \numberthis\label{eq:pr.le.p-value.non-asympt.major-1}
  \end{align*}
  Notice that $W_j$ has a symmetric distribution around 0, as shown by
  Remark~\ref{remark:P0-symmetric}, that is, $-W_j$ and $W_j$ have the same
  distribution.
  Since $W_j$ and  $\{W_k\}_{k \in \cS^c \backslash\{j\}} $ are independent with the
  same distribution $\bbP_0$ by Assumption~\ref{assumption:ko-stat-dist}, they
  have the same joint distribution as
  $F_0^{-1}(U), F_0^{-1}(U_1), \ldots, F_0^{-1}(U_m)$ where
  $U,U_1, \ldots, U_m$ are independent random variables with uniform
  distribution over $[0,1]$, and $F_0^{-1}$ denotes the generalized inverse of
  $F_0$.
  Therefore, Eq.~\eqref{eq:pr.le.p-value.non-asympt.major-1} can be rewritten
  as
  \begin{gather}
    \bbP(\pi_j \leq t)
    \leq \bbP \left(  \frac{m}{p} \widetilde{F}_{m} \bigl( F_0^{-1}(U) \bigr)  + \frac{1}{p} \leq t \right) 
    \label{eq:pr.le.p-value.non-asympt.major-2}
    \\ 
    \text{where} \qquad 
    \forall v \in \bbR, \qquad 
    \widetilde{F}_{m}(v) 
    \egaldef \frac{1}{m} \sum_{k=1}^m \mathbbm{1}_{ F_0^{-1}(U_k) \leq v }
    \notag 
    \, . 
  \end{gather}
  Notice that for every $u \in \bbR$, 
  \begin{flalign*}
    \widehat{G}_{m}(u) 
    &\egaldef \frac{1}{m} \sum_{k=1}^m \mathbbm{1}_{ U_k \leq u }
    \leq \frac{1}{m} \sum_{k=1}^m \mathbbm{1}_{ F_0^{-1}(U_k) \leq F_0^{-1}(u) } \\
    &= \widetilde{F}_{m} \bigl( F_0^{-1}(u) \bigr)
  \end{flalign*}
  since $F_0^{-1}$ is non-decreasing. 
  Therefore, Eq.~\eqref{eq:pr.le.p-value.non-asympt.major-2} shows that
  \begin{align}
    \notag
    \bbP(\pi_j \leq t)
    &\leq 
      \bbP \left( m \widehat{G}_{m}(U) \leq tp - 1 \right) \\
    &= \int_0^1 \bbP \left( m \widehat{G}_{m}(u) \leq tp - 1 \right) \mathrm{d}u 
      \, . 
      \label{eq:pr.le.p-value.non-asympt.major-3}
  \end{align}

  Now, we notice that for every $u \in (0,1)$, $m \widehat{G}_{m}(u)$ follows a
  binomial distribution with parameters $(m,u)$.
  So, a standard application of Bernstein's inequality
  \cite[Eq. 2.10]{Bou_Lug_Mas:2011:livre} shows that for every
  $0 \leq x \leq u \leq 1$,
  \begin{align*}
    \bbP \left( m \widehat{G}_{m}(u) \leq mx \right) 
    &\leq \exp\left( \frac{-m^2 (u-x)^2}{2 mu + \frac{m (u-x)}{3}} \right) \\
    % 
    %% = \exp\left( \frac{- 3 m (u-x)^2}{6x+ 7 (u-x)} \right) 
    % 
    &= \exp\left( \frac{- 3 m x \left( \frac{u}{x} - 1 \right)^2}{\frac{7 u}{x} - 1} \right) 
      . 
  \end{align*}
  Note that for every $\lambda \in (0,1/7)$, we have 
  \[
    \forall w \geq \frac{1-\lambda}{1-7\lambda} \geq 1 \, , 
    \qquad 
    \frac{w-1}{7w-1} \geq \lambda 
  \]
  hence $\forall u \geq x \dfrac{1-\lambda}{1-7\lambda}$,
  \[
    \bbP \left( m \widehat{G}_{m}(u) \leq mx \right) 
    \leq \exp\left[ - 3 m \lambda x \left( \frac{u}{x} - 1 \right) \right] \, . 
  \]
  As a consequence, $\forall \lambda \in ( 0 , 1/7 )$,
  \begin{align*}
    \int_0^1 \bbP \left( m \widehat{G}_{m}(u) \leq mx \right) \mathrm{d}u
    &\leq 
      \frac{1-\lambda}{1-7\lambda} x +
      \int_{\frac{1-\lambda}{1-7\lambda} x}^1 \exp\bigl[ - 3 m \lambda (u-x)
      \bigr] \mathrm{d}u 
    \\&\leq 
    \frac{1-\lambda}{1-7\lambda} x +
    \int_{\frac{6\lambda}{1-7\lambda} x}^{+\infty} \exp ( - 3 m \lambda v  ) \mathrm{d}v 
    \\&\leq 
    \frac{1-\lambda}{1-7\lambda} x +
    \frac{1}{3 m\lambda} \exp\left(- 3 m \lambda \frac{6\lambda}{1-7\lambda} x \right)
    \\ &\leq \frac{1-\lambda}{1-7\lambda} x + \frac{1}{3 m\lambda}. 
  \end{align*}
  Taking $x = (tp-1)/m$, we obtain from
  Eq.~\eqref{eq:pr.le.p-value.non-asympt.major-3} that
  $\forall \lambda \in ( 0, 1/7 )$
  \begin{align*}
    \bbP(\pi_j \leq t)
    &\leq 
      \frac{1-\lambda}{1-7\lambda} \frac{tp-1}{m} + \frac{1}{3m\lambda}
    \\&= \frac{1-\lambda}{1-7\lambda} \frac{tp}{m} 
      + \left( \frac{1}{3\lambda} - \frac{1-\lambda}{1-7\lambda} \right) \frac{1}{m} 
      \, .     \numberthis \label{eq:kappa-value}
  \end{align*}
  Choosing $\lambda = (5 - \sqrt{22})/3 \in (0,1/7)$, we have
  $\frac{1}{3\lambda} = \frac{1-\lambda}{1-7\lambda}$ hence the result with
  \[ 
    \kappa = \frac{1-\lambda}{1-7\lambda}
    = \frac{\sqrt{22}-2}{7 \sqrt{22}-32} 
    \leq 3.24.
  \]
\end{proof}

\begin{remark}
If the definition of $\pi_j$ is replaced by  
\begin{equation} %%\label{eq:pval:variant}
  \pi_{j,c} \egaldef  %
    \begin{cases}
      \dfrac{c + \# \{k: W_k \leq -W_j \}}{p} \quad \text{if} \quad W_j > 0\\
      1 \quad \text{if} \quad W_j \leq 0 
    \end{cases}
\end{equation}
for some $c > 0$, the above proof also applies and yields an upper bound of the
form
\[ 
\forall t \geq 0 \, , \qquad 
\bbP(\pi_{j,c} \leq t) \leq \kappa(c) t 
\]
for some constant $\kappa(c)>0$.
It is then possible to make $\kappa(c)$ as close to~$1$ as desired, by choosing
$c$ large enough.  Lemma~\ref{lemma:pval} corresponds to the case $c=1$.
\end{remark}

Note that we also prove in the Appendix that if $p \to +\infty$ with
$\abs{\cS} \ll p$, then for every $j \geq 1$ such that $\beta^*_j=0$, $\pi_j$
is an asymptotically valid p-value, that is,
\begin{equation} \label{eq.pval.asympt}
\forall t \in [0,1] \, ,  \qquad 
\limsup_{p \to +\infty}    \bbP(\pi_j \leq t) \leq t 
\, . 
\end{equation}
Yet, proving our main result (Theorem~\ref{thm:fdr-control}) requires a non-asymptotic bound such
that the one of Lemma~\ref{lemma:pval}.

\subsection{FDR control for AKO}
Finally, the following theorem provides a non-asymptotic guarantee about the FDR
of AKO with BY step-up.
\begin{theorem}%[Proof in Appendix \ref{sup-proof-thm:fdr-control}]
  \label{thm:fdr-control}
  If Assumption~\ref{assumption:ko-stat-dist} holds true and
  $\lvert \cS^c \rvert \geq 2$, then for any \( B \geq 1 \) and
  \( \alpha \in (0,1) \),
  the output \( \widehat{\cS}_{AKO+BY} \) of aggregation of multiple knockoff
  \textup{(}Algorithm \ref{alg:ako}\textup{)},   
  with the BY step-up procedure, has a FDR controlled as follows\textup{:}
  \[
    \bbE \left[ \dfrac{\ |\widehat{S}_{AKO+BY} \cap \ \cS^c| \ }{ |\widehat{S}_{AKO+BY}| \vee 1
      }\right] \leq \kappa \alpha
  \]
  where \( \kappa %= \dfrac{\sqrt{22}-2}{7 \sqrt{22}-32}
  \leq 3.24 \) is defined
  in Lemma~\ref{lemma:pval}.
\end{theorem}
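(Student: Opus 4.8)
The plan is to reduce the theorem to two ingredients: (i) each aggregated null p-value $\bar{\pi}_j$ is super-uniform up to the explicit constant $c := \kappa p / \lvert \cS^c \rvert$, and (ii) the Benjamini–Yekutieli step-up controls FDR under \emph{arbitrary} dependence, with a bound that scales linearly in that constant.

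First I would fix a null index $j \in \cS^c$ and bound the tail of $\bar{\pi}_j$. By Lemma~\ref{lemma:pval}, every single-bootstrap p-value satisfies $\bbP(\pi_j^{(b)} \leq s) \leq c\, s$ for all $b \in [B]$ and $s \in [0,1]$. The quantile-aggregation step of Eq.~\eqref{eq:q-agg} is exactly the construction of \cite{meinshausen_p-values_2009}, whose validity argument is distribution-free and survives arbitrary dependence across the bootstraps. Indeed, for $t \in [0,1)$ the event $\{\bar{\pi}_j \leq t\}$ equals $\{q_{\gamma}(\{\pi_j^{(b)}\}) \leq \gamma t\}$, i.e. at least a fraction $\gamma$ of the $\pi_j^{(b)}$ fall below $\gamma t$, so a Markov inequality applied to $\frac{1}{B}\sum_{b} \mathbbm{1}_{\pi_j^{(b)} \leq \gamma t}$ gives
\[
  \bbP(\bar{\pi}_j \leq t) \leq \frac{1}{\gamma}\cdot\frac{1}{B}\sum_{b=1}^{B} \bbP\bigl(\pi_j^{(b)} \leq \gamma t\bigr) \leq \frac{1}{\gamma}\cdot c\,\gamma t = c\,t \, ,
\]
while the case $t \geq 1$ is trivial since $c \geq 1$. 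Thus aggregation preserves the super-uniform bound with the \emph{same} constant $c$, and crucially without any independence requirement among the $B$ bootstraps.

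Second I would invoke the Benjamini–Yekutieli guarantee. The BY threshold of Definition~\ref{def:BY-step-up} carries the factor $\beta(p) = (\sum_{i=1}^p 1/i)^{-1}$ precisely to offset the harmonic factor $\sum_i 1/i$ that arises when FDR is controlled under arbitrary dependence \cite{benjamini_control_2001}. Re-reading that proof, the only place the null marginals enter is through the super-uniformity inequality $\bbP(\bar{\pi}_j \leq u) \leq u$; replacing it by $\bbP(\bar{\pi}_j \leq u) \leq c\,u$ propagates the constant $c$ linearly through the telescoping over the rejection count, so the procedure run at level $\alpha$ controls FDR at $c \cdot \tfrac{\lvert \cS^c\rvert}{p}\,\alpha$. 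Substituting $c = \kappa p / \lvert \cS^c \rvert$ collapses the $p / \lvert \cS^c \rvert$ factors and yields exactly $\kappa \alpha$, the claimed bound.

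The main obstacle is the second step. The dependence among the $\{\bar{\pi}_j\}_{j}$ across variables is genuine: each $\pi_j$ is built from a shared count over all $W_k$, so the null p-values are neither independent nor obviously positively dependent, which is precisely why BH alone would not suffice and BY is needed. Care is therefore required to confirm that the BY argument is indeed distribution-free in the null marginals and that the constant $c$ factors out cleanly rather than interacting with the $\beta(p)$ correction. Everything else — the Markov bound of the first step and the combinatorial bookkeeping over $\lvert \widehat{S}_{AKO+BY}\rvert$ — is routine once this point is pinned down.
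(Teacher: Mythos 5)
Your proposal is correct, and it reaches the paper's bound $\kappa\alpha$ through the same two core ingredients, but organized differently. The paper proves a single combined result (Lemma~\ref{le.FDR-aggregation}): it redoes the Benjamini--Yekutieli telescoping argument of \cite{benjamini_control_2001} directly on the quantities $Q_j = \tfrac{p}{\gamma} q_\gamma(\{\pi_j^{(b)}\})$, and only \emph{inside} that computation applies the Markov/quantile inequality of \cite{meinshausen_p-values_2009} to pass to the per-bootstrap marginals $\bbP(\pi_i^{(b)} \leq \cdot)$, to which Lemma~\ref{lemma:pval} is then applied with $C = \kappa p/\lvert \cS^c\rvert$. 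You instead factor the argument through the marginal law of the aggregated p-value: first show $\bbP(\bar{\pi}_j \leq t) \leq c\,t$ with the same constant $c = \kappa p/\lvert\cS^c\rvert$ (your Markov step is exactly the one in the paper's proof, and is valid without any independence across bootstraps), then invoke a ``constant-robust'' version of the BY theorem under arbitrary dependence. The two routes are inequality-for-inequality equivalent and yield the identical constant, since in the BY telescoping the super-uniformity bound enters linearly and the harmonic sum is exactly cancelled by $\beta(p)$. What your modular route buys is a cleaner statement of independent interest (aggregated p-values remain super-uniform up to the same constant); what it costs is precisely the point you flag yourself: BY's theorem as published assumes $\bbP(P_i \leq u) \leq u$ for nulls, so the claim that the factor $c$ propagates linearly and does not interact with $\beta(p)$ is not a citation but a re-derivation --- and that re-derivation is exactly the content the paper makes explicit in Lemma~\ref{le.FDR-aggregation}. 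In a fully written-up version you would therefore end up reproducing essentially that lemma's computation (decomposition over rejection counts, Abel summation, harmonic factor), just with $\bar{\pi}_j$ in place of the $Q_i$.
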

\begin{proof}[Sketch of the proof]
  The proof of \cite[Theorem~3.3]{meinshausen_p-values_2009}, which itself
  relies partly on \cite{benjamini_control_2001}, can directly be adapted to
  upper bound the FDR of $\widehat{S}_{AKO+BY}$ in terms of quantities of the
  form $\bbP(\pi_j^{(b)} \leq t)$ for $j \in \cS^c$ and several $t \geq 0$.
  Combined with Lemma \ref{lemma:pval}, this yields the result.  A full proof is
  provided in Appendix~\ref{sec.pr.ThmPpal}.
\end{proof}
Note that Theorem~\ref{thm:fdr-control} loses a factor $\kappa$ compared to the
nominal FDR level $\alpha$.
This can be solved by changing $\alpha$ into $\alpha/\kappa$ in the definition
of $\widehat{S}_{AKO+BY}$.
Nevertheless, in our experiments, we do not use this correction and find that
the FDR is still controlled at level~$\alpha$.

%%%%%%%%%%%%%%%%%%%%%%%%%%%%%%%%%%%%%%%%%%%%%%%%%%%%%%%%%%%%%%%%%%%%%%%%%%%%%%%%

\section{Experiments}
\label{sec:experiments}
\textbf{Compared Methods.}
We make benchmarks of our proposed method aggregation of multiple knockoffs
(AKO) with \( B=25, \gamma=0.3 \) and vanilla knockoff (KO), along with other
recent methods for controlling FDR in high-dimensional settings, mentioned in
Section \ref{ssec:related-works}:
\emph{simultaneous knockoff}, an alternative aggregation scheme for
knockoff inference introduced by \cite{gimenez_improving_2019} (KO-GZ),
along with its variant of \cite{emery_controlling_2019} (KO-EK);
the \emph{knockoff statistics aggregation} by \cite{holden_multiple_2018} (KO-HH);
and \emph{debiased Lasso} (DL-BH) \cite{javanmard_fdr_2019}.
%
%%%%% Sylvain: I don't see why this sentence is useful here (in a paragraph about the *methods* compared)
%%The benchmarks are done on both synthetic and real scientific datasets, a
%%Genome Wide Association Studies (GWAS) and a functional Magnetic Resonance
%%Imaging (fMRI) dataset.

\subsection{Synthetic Data}
\label{ssec:synthetic-experiment}
\textbf{Simulation Setup.} Our first experiment is a simulation scenario where a
design matrix \( \bX \) (\( n=500, p=1000 \)) with its continuous response
vector \( \by \) are created following a linear model assumption.
The matrix is sampled from a multivariate normal distribution of zero mean and
covariance matrix \( \mathbf{\Sigma} \in \bbR^{p \times p} \).
We generate \( \mathbf{\Sigma} \) as a symmetric Toeplitz matrix that has the
structure:

\[
  \mathbf{\Sigma} =
  \begin{bmatrix}
    \rho^0  & \rho^{1} & \dots & \rho^{p-1} \\
    \rho^1  & \ddots & \dots &  \rho^{p-2} \\
    \vdots & \dots & \ddots & \vdots \\
    \rho^{p-1} & \rho^{p-2} & \dots & \rho^0
  \end{bmatrix}
\]

where the \( \rho \in (0, 1) \) parameter controls the correlation structure of
the design matrix. This means that neighboring variables are strongly
correlated to each other, and the correlation decreases with the distance between indices.
The true regression coefficient \( \bm\beta^* \) vector is picked with a
sparsity parameter that controls the proportion of non-zero elements with
amplitude 1.
The noise \( \bm\epsilon \) is generated to follow \( \cN(\bm\mu,
\bI_n) \) with its magnitude \( \sigma = \norm{\bX\bm\beta^*}_2 /
(\text{SNR}\norm{\bm\epsilon}_2) \) controlled by the SNR parameter.
The response vector \( \by \) is then sampled according to
Eq.~\refp{eq:linear}. In short, the three main parameters controlling
this simulation are correlation \( \rho \), sparsity degree \( k \)
and signal-to-noise ratio SNR.

\textbf{Aggregation Helps Stabilizing Vanilla Knockoff.}
To demonstrate the improvement in stability of the aggregated knockoffs, we first
do multiple runs of AKO and KO with $\alpha = 0.05$ under \emph{one simulation} of \( \bX \) and
\( \by \).
In order to guarantee a fair comparison, we compare 100 runs of AKO, each with
\( B=25 \) bootstraps, with the corresponding 2500 runs of KO.
We then plot the histogram of FDP and power in Figure~\ref{fig:histo-fdp-power}.
For the original knockoff, the false discovery proportion varies widely and has
a small proportion of FDP above $0.2=4\alpha$.
Besides, a fair amount of KO runs returns null power.

On the other hand, AKO not only improves the stability in the result for FDP
---the FDR being controlled at the nominal level $\alpha=0.05$--- but it also improves
statistical power: in particular, it avoids catastrophic behavior (zero power)
encountered with KO.

\begin{figure}[h]
  \centering
  \includegraphics[width=0.7\textwidth]{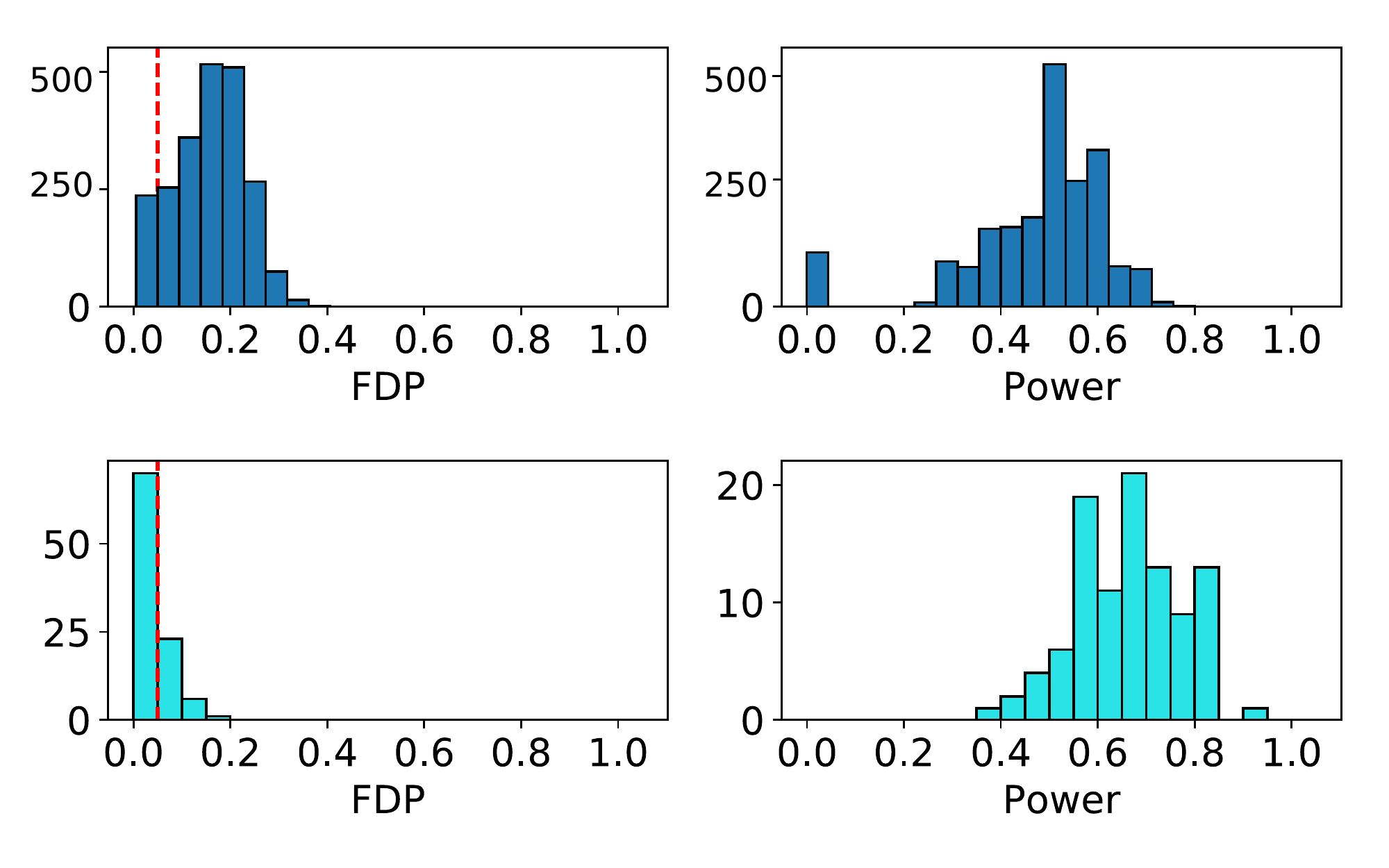}
  \caption {\textbf{Histogram of FDP and power for 2500 runs of KO (blue, top row) vs.
      100 runs of AKO with \( \mathbf{B=25} \) (teal, bottom row) under \underline{the same
        simulation}}. Simulation parameter:
    \( \text{SNR}=3.0, \rho=0.5, \text{ sparsity}=0.06\). FDR is controlled at
    level \( \alpha=0.05 \).}
  \label{fig:histo-fdp-power}
\end{figure}

\textbf{Inference Results on Different Simulation Settings.}
To observe how each algorithm performs under various scenarii, we vary each of
the three simulation parameters while keeping the others unchanged at default
value.
The result is shown in Figure~\ref{fig:fdr-power}.
Compared with KO, AKO improves statistical power while still controlling the
FDR.
Noticeably, in the case of very high correlation between nearby variables
(\( \rho > 0.7 \)), KO suffers from a drop in average power.
The loss also occurs, but is less severe for AKO.
Moreover, compared with simultaneous knockoff (KO-GZ), AKO gets better control
for FDR and a higher average power in the extreme correlation (high \( \rho \))
case.
Knockoff statistics aggregation (KO-HH), contrarily, is spurious: it detects
numerous truly significant variables with high average statistical power, but
at a cost of failure in FDR control, especially when the correlation parameter
$\rho$ gets bigger than~$0.6$.
Debiased Lasso (DL-BH) and KO-EK control FDR well in all scenarii, but are the
two most conservative procedures. %% out of the six.

\begin{figure}[h]
  \centering
  \includegraphics[width=0.7\textwidth]{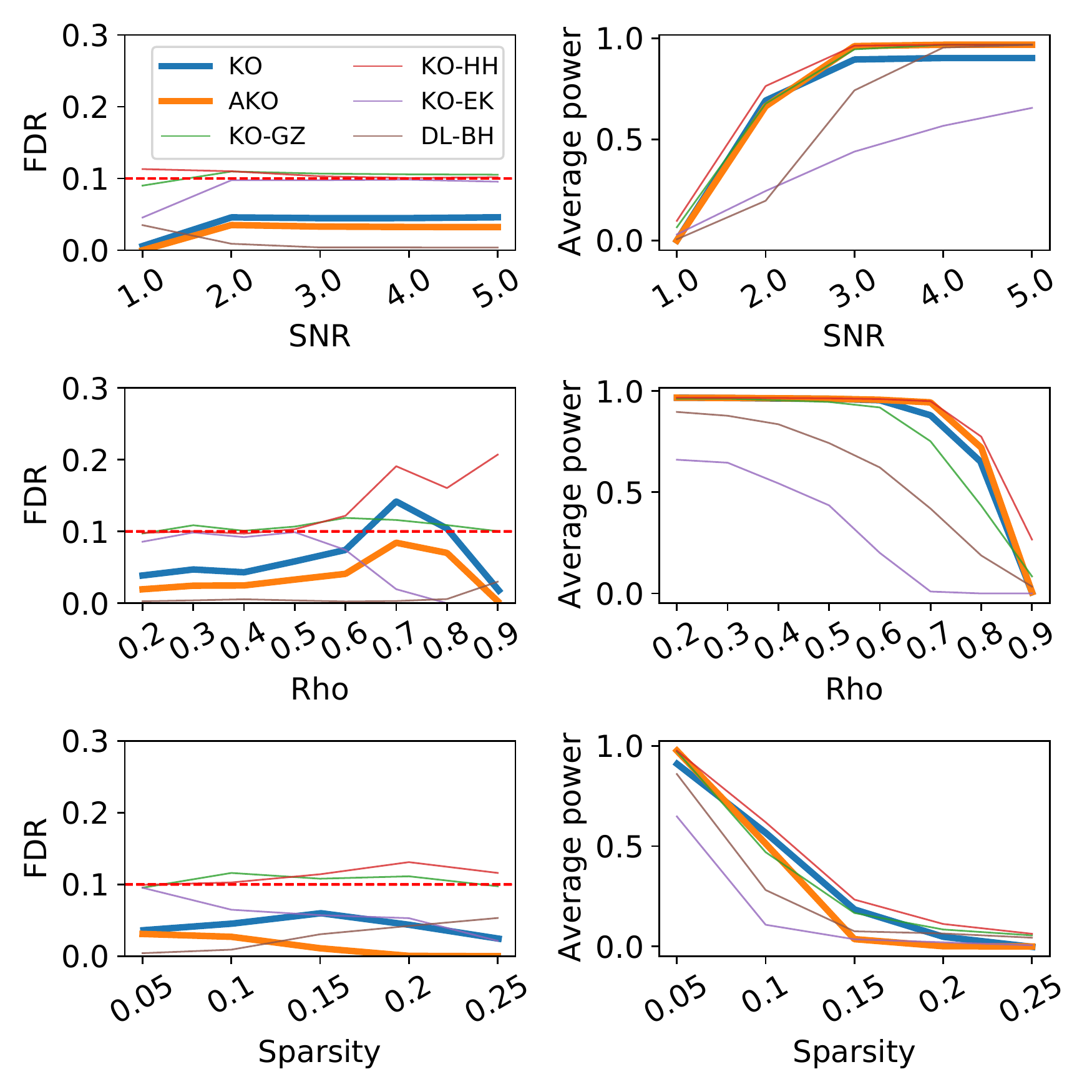}
  \caption {\textbf{FDR (left) and average power (right) of several methods for
      100 runs with varying simulation parameters}.
    For each varying parameter, we keep the other ones at default value:
    \( \text{SNR}=3.0, \rho=0.5, \text{ sparsity}=0.06 \).
    FDR is controlled at level \( \alpha=0.1 \).
    The benchmarked methods are: aggregation of multiple knockoffs (AKO -- ours); vanilla
    knockoff (KO);
    simultaneous knockoff by \cite{gimenez_improving_2019} (KO-GZ) and 
    by \cite{emery_controlling_2019} (KO-EK);
    knockoff statistics aggregation (KO-HH); debiased-Lasso (DL-BH).}
  \label{fig:fdr-power}
\end{figure}

\textbf{Choice of \( \mathbf{B} \) and \( \bm\gamma \) for AKO.}
Figure~\ref{fig:b-gamma-varying} shows an experiment when varying \(\gamma\) and
\( B \).
FDR and power are averaged across 30 simulations of fixed parameters: SNR=3.0,
\(\rho=0.7\), sparsity=0.06.
Notably, it seems that there is no further gain in statistical power when
\( B > 25 \).
Similarly, the power is essentially equal for \(\gamma\) values greater than 0.1
when \( B \geq 25 \).
Based on the results of this experiment we set the default value of
\( B = 25, \gamma=0.3 \).

\begin{figure}[t]
  \centering
  \includegraphics[width=0.5\textwidth]{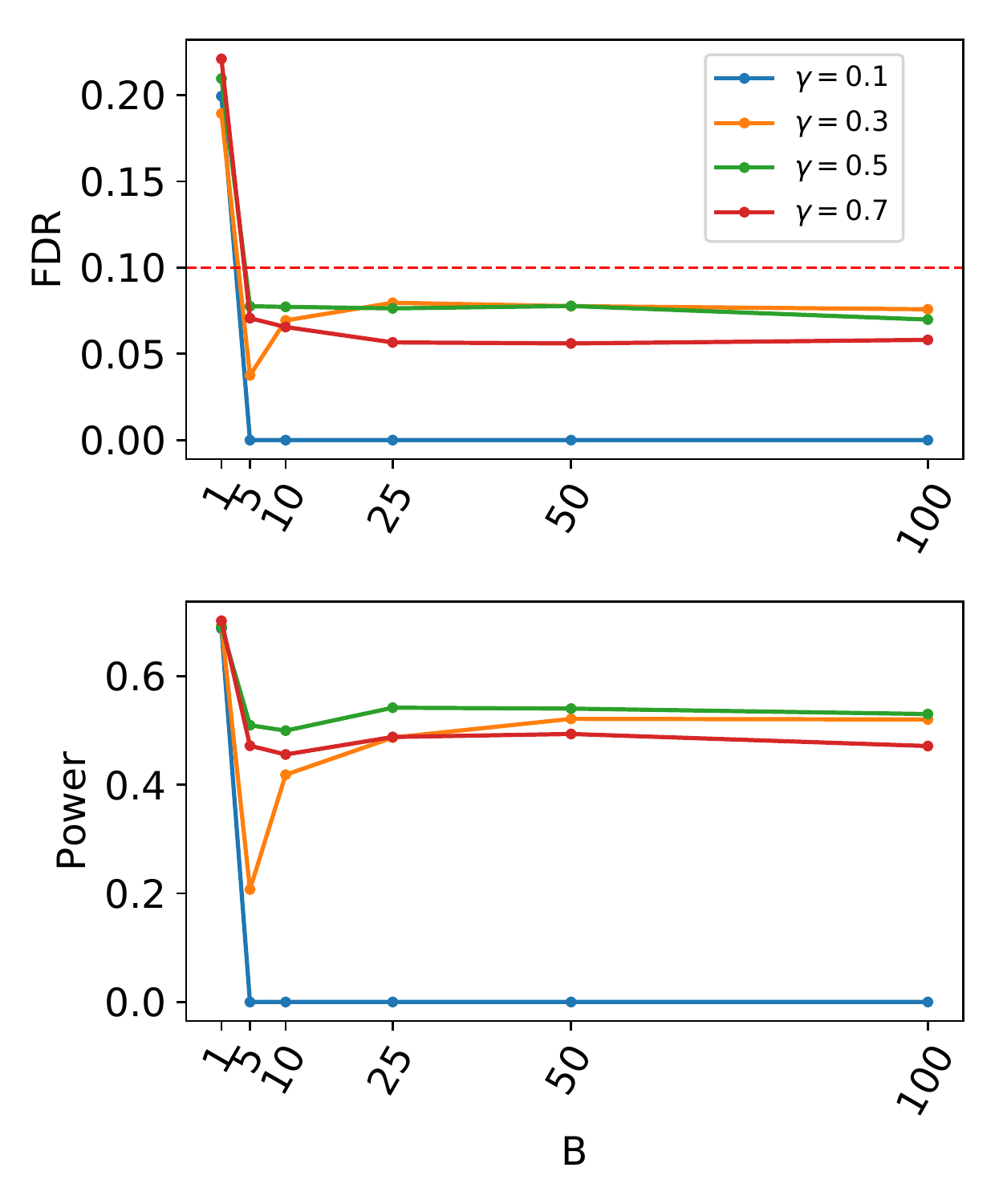}
  \caption {\textbf{FDR and average power for 30 simulations of fixed parameters:
    SNR=3.0, \(\mathbf{\bm\rho=0.7}\), sparsity=0.06.} %
    There is virtually no gain in statistical power when \( B > 25 \) and when
    \( \gamma \geq 0.1 \).}
  \label{fig:b-gamma-varying}
\end{figure}

\subsection{GWAS on Flowering Phenotype of \textit{Arabidopsis
  thaliana}}
\label{sec:experiments:GWAS}

To test AKO on real datasets, we first perform a genome-wide association study (GWAS) 
on genomic data.
The aim is to detect association of each of 174 candidate genes with a
phenotype \textbf{FT\_GH} that describes flowering time of \textit{Arabidopsis
  thaliana}, first done by \cite{atwell_genome_2010}.
Preprocessing is done similarly to \cite{azencott_efficient_2013}: 166 data
samples of 9938 binary SNPs located within a \( \pm20- \)kilobase window of 174
candidate genes that have been selected in previous publications as most likely
to be involved in flowering time traits.
Furthermore, we apply the same dimension reduction by hierarchical
clustering as \cite{slim_kernelpsi_2019} to make the final design matrix of size
$n=166$ samples \( \times \) $p=1560$ features.
We list the detected genes from each method in Table \ref{table:detected-genes}.

\begin{table}[h]
  \caption{\textbf{List of detected genes associated with phenotype
      FT\_GH}. Empty line (---) signifies no detection. Detected genes are
    listed in well-known studies dated up to 20 years ago.}
\label{table:detected-genes}
\vskip 0.15in
\begin{center}
\begin{small}
\begin{sc}
\begin{tabular}{ll}
\toprule
\textbf{Method}  & \textbf{Detected Genes} \\
\midrule
AKO    & AT2G21070, AT4G02780, AT5G47640 \\
KO     & AT2G21070 \\
KO-GZ  & AT2G21070 \\
DL-BH   & --- \\
\bottomrule
\end{tabular}
\end{sc}
\end{small}
\end{center}
\vskip -0.1in
\end{table}

The three methods that rely on sampling knockoff variables detect AT2G21070.
This gene, which is responsible for the mutant FIONA1, is listed by
\cite{kim_fiona1_2008} to be vital for regulating period length in the
\textit{Arabidopsis} circadian clock.
FIONA1 also appears to be involved in photoperiod-dependent flowering and in
daylength-dependent seedling growth.
In particular, the time for opening of the first flower for FIONA1 mutants
are shorter than the ones without under both long and short-day conditions.
In addition to FIONA1 mutant, AKO also detects AT4G02780 and AT5G47640.
It can be found in studies dating back to the 90s
\cite{silverstone_arabidopsis_1999} that AT4G02780 encodes a mutation for late
flowering.
Meanwhile, AT5G47640 mutant delay flowering in long-day but not in short-day
experiments \cite{cai_putative_2007}.

\subsection{Functional Magnetic Resonance Imaging (fMRI) analysis on Human
  Connectome Project Dataset}
\label{sec:experiments:HCP900}

Human Connectome Project (HCP900) is a collection of neuroimaging and behavioral
data on 900 healthy young adults, aged 22--35.
Participants were asked to perform different tasks inside an MRI scanner while
blood oxygenation level dependent (BOLD) signals of the brain were recorded.
The analysis investigates what brain regions are predictive of the subtle
variations of cognitive activity across participants, conditional to other brain
regions.
Similar to genomics data, the setting is high-dimensional with \(n = 1556 \)
samples acquired and 156437 brain voxels.
A voxel clustering step that reduces data dimension to \(p = 1000\) clusters is
done to make the problem tractable.

When decoding brain signals on HCP subjects performing a foot motion experiment
(Figure~\ref{fig:hcp-emotional}, left), AKO recovers an anatomically correct
anti-symmetric solution, in the motor cortex and the cerebellum, together with a
region in a secondary sensory cortex.
KO only detects a subset of those.
Moreover, across seven such tasks, the results obtained independently from
DL-BH are much more similar to AKO than to KO, as measured with Jaccard index
of the resulting maps (Figure~\ref{fig:hcp-emotional}, right).
The maps for the seven tasks are represented in the Appendix.
Note that the sign of the effect for significant regions is readily obtained
from the regression coefficients, with a voting step for bootstrap-based
procedures.
\begin{figure}[h]
  \centering
    \includegraphics[width=0.35\textwidth]{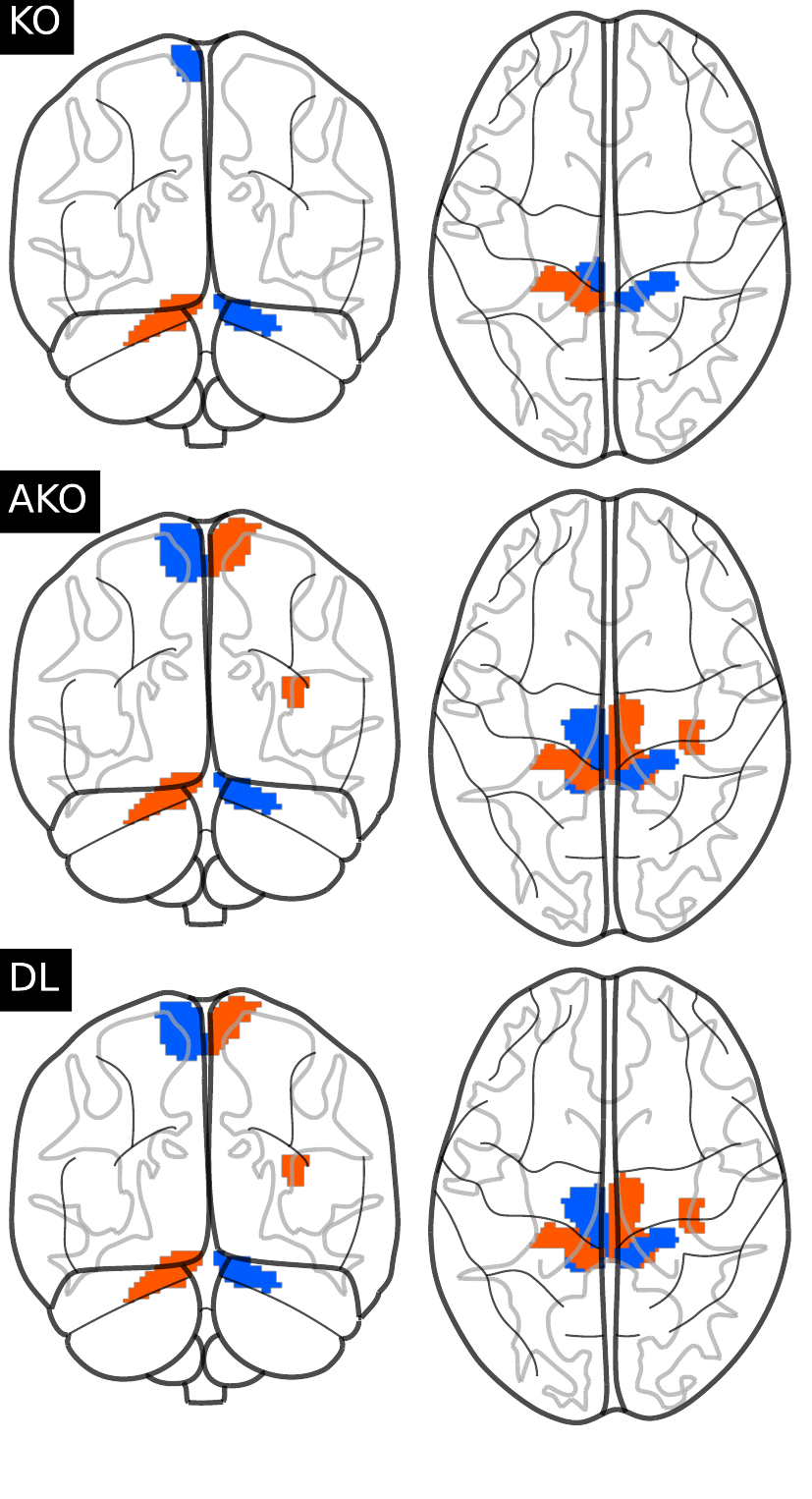}
    \includegraphics[width=0.32\textwidth]{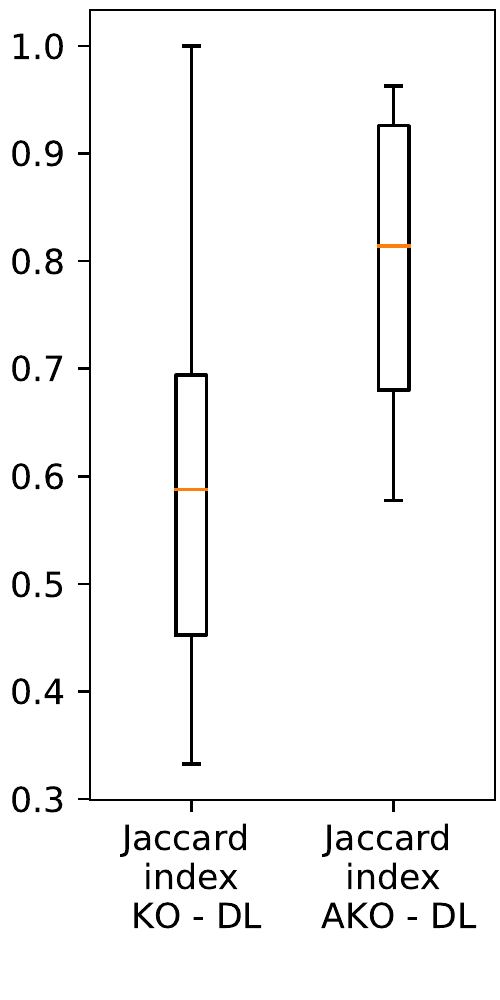}
  \caption{\textbf{Detection of significant brain regions for HCP data (900
      subjects).} (left) Selected regions in a left or right foot movement
    task. \textcolor{orange}{\textbf{Orange}}: brain areas with positive sign
    activation. \textcolor{blue}{\textbf{Blue}}: brain areas with negative sign
    activation. Here the AKO solution recovers an anatomically correct pattern,
    part of which is missed by KO. (right) Jaccard index measuring the Jaccard
    similarity between the KO/AKO solutions on the one hand, and the DL solution
    on the other hand, over 7 tasks: AKO is significantly more consistent with
    the DL-BH solution than KO.}
  \label{fig:hcp-emotional}
\end{figure}

%%%%%%%%%%%%%%%%%%%%%%%%%%%%%%%%%%%%%%%%%%%%%%%%%%%%%%%%%%%%%%%%%%%%%%%%%%%%%%%%

\section{Discussion}

In this work, we introduce a p-value to measure knockoff importance and
design a knockoffs bootstrapping scheme that leverages this quantity.
With this we are able to tame the instability inherent to the original knockoff
procedure.
Analysis shows that aggregation of multiple knockoffs retains theoretical
guarantees for FDR control.
However, \textit{i)} the original argument of \cite{barber_controlling_2015} no
longer holds (see Appendix); \textit{ii)} a factor $\kappa$ on the FDR control
is lost; this calls for tighter FDR bounds in the future, since we always
observe empirically that the FDR is controlled without the factor~$\kappa$.
Moreover, both numerical and realistic experiments show that performing
aggregation results in an increase in statistical power and also more
consistent results with respect to alternative inference methods.

The quantile aggregation procedure from \cite{meinshausen_p-values_2009} used
here is actually conservative: as one can see in Figure~\ref{fig:fdr-power}, the
control of FDR is actually stricter than without the
aggregation step.
Nevertheless, as often with aggregation-based approaches, the gain in accuracy
brought by the reduction of estimator variance ultimately brings more power.
%
% An interesting open question is whether less conservative aggregation schemes
% can be used instead.

%\medbreak

We would like to address here two potential concerns about FDR control for AKO+BH. 
The first one is when the $\{W_j \}_{j \in \cS^c}$ are not independent, hence
violating Assumption \ref{assumption:ko-stat-dist}.
In the absence of a proof of Theorem~\ref{thm:fdr-control} that would hold
under a general dependency, we first note that several schemes for knockoff
construction (for instance, the one of \cite{candes_panning_2018}) imply the
independence of $(\bx_i - \tilde{\bx}_i)_{i \in [p]}$, as well as their pseudo
inverse.
These observations do not establish the independence of $W_j$.  Yet,
intuitively, the Lasso coefficient of one variable should be much more
associated with its knockoff version than with other variables, so it should
not be much affected by these other variables, making the Lasso-coefficient
differences weakly correlated if not independent.
Moreover, in the proof of Lemma~\ref{lemma:pval} and
Theorem~\ref{thm:fdr-control}, Assumption \ref{assumption:ko-stat-dist} is only
used for applying Bernstein's inequality,
%%% applying Bernstein's inequality to the (1_{U_k \leq u})_k, where W_k=F_0^{-1}(U_k),
and several dependent versions of Bernstein's inequality have been proved
\cite[among others]{Sam:2000,Mer_Pel_Rio:2009,Han_Ste:2017}.  Similarly, the
proof of Eq.~\eqref{eq.pval.asympt} only uses Assumption
\ref{assumption:ko-stat-dist} for applying the strong law of large numbers, a
result which holds true for various kinds of dependent variables (for
instance, \cite{Abd:2018}, and references therein).
Therefore we conjecture that independence in Assumption
\ref{assumption:ko-stat-dist} can be relaxed into some mixing condition.
Overall, given that the unstability of KO with respect to the KO randomness is
an important drawback (see Figure~\ref{fig:histo-fdp-power}), we consider
Assumption \ref{assumption:ko-stat-dist} as a reasonable price price to pay for
correcting it, given that we expect to relax it in future works.
\\
The second potential concern is that Theorem~\ref{thm:fdr-control} is for AKO
with $\widehat{k}$ computed from the BY procedure, while BH step-up may not
control the FDR when the aggregated p-values $(\bar{\pi}_j)_{j \in [p]}$ are
not independent.
We find empirically that the $(\bar{\pi}_j)_{j \in [p]}$ do not exhibit
spurious Spearman correlation (Figure~\ref{fig:spearman} in Appendix) under a
setting where the $W_j$ satisfy a mixing condition.
This is a mild assumption that should be satisfied, especially when each
feature $X_j$ only depends on its "neighbors" (as typically observed on
neuroimaging and genomics data).
It is actually likely that the aggregation step contributes to reducing the
statistical dependencies between the $(\bar{\pi}_j)_{j \in [p]}$.
Eventually, it should be noted that BH can be replaced by BY
\cite{benjamini_control_2001} in case of doubt.
\\
To conclude on these two potential concerns, let us emphasize that the FDR of
AKO+BH with $B>1$ is always below $\alpha$ (up to error bars) in \emph{all} the
experiments we did, including preliminary experiments not shown in this
article, which makes us confident when applying AKO+BH on real data such as the
ones of Sections~\ref{sec:experiments:GWAS}--\ref{sec:experiments:HCP900}.

%%\medbreak

A practical question of interest is to handle the cases where $n\ll p$, that
is, the number of features overwhelms the number of samples. Note that in our
experiments, we had to resort to a clustering scheme of the brain data and to
select some genes.
A possible extension is to couple this step with the inference framework, in
order to take into account that for instance the clustering used is not given
but \emph{estimated} from the data, hence with some level of uncertainty.

The proposed approach introduces two parameters: the number $B$ of bootstrap
replications and the $\gamma$ parameter for quantile aggregation.
The choice of $B$ is simply driven by a compromise between accuracy (the larger
$B$, the better) and computation power, but we consider that much of the
benefit of AKO is obtained for $B \approx 25$.
Regarding $\gamma$, adaptive solutions have been proposed
\cite{meinshausen_p-values_2009}, but we find that choosing a fixed quantile
(0.3) yields a good behavior, with little variance and a good sensitivity.

\section*{Acknowledgements}

The authors thank anonymous reviewers for their helpful comments and
suggestions. We are grateful for the help of Lotfi Slim and Chlo\'e-Agathe
Azencott on the \textit{Arabidopsis thaliana} dataset, and the people of Human
Connectome Project on HCP900 dataset.

This research is supported under funding of French ANR project FastBig
(ANR-17-CE23-0011), the KARAIB AI chair and Labex DigiCosme
(ANR-11-LABEX-0045-DIGICOSME).

\bibliography{bibliography}
\bibliographystyle{alphaabbr}

\newpage
%{\center {\Large\bf Appendix}}
\section*{Appendix}
\appendix

%%%%%%%%%%%%%%%%%%%%%%%%%%%%%%%%%%%%%%%%%%%%%%%%%%%%%%%%%%%%%%%%%%%%%%%%%%%%%%%%

% for equation symbol in appendix section
\renewcommand{\theequation}{\thesection.\arabic{equation}}
\renewcommand{\thelemma}{\thesection.\arabic{lemma}}
\renewcommand{\theremark}{\thesection.\arabic{remark}}
\renewcommand{\theassumption}{\thesection.\arabic{assumption}}
\renewcommand{\theresult}{\thesection.\arabic{result}}
\renewcommand{\thefigure}{\thesection.\arabic{figure}}
%%%%%%%%%%%%%%%%%%%%%%%%%%%%%%%%%%%%%%%%%%%%%%%%%%%%%%%%%%%%%%%%%%%%%%%%%%%%%%%%

The Appendix is organized as follows.
First, the main theoretical results of the article are proved:
\begin{itemize}
\item Proof of Proposition~\ref{prop:equivalence}: AKO+BH with $B=1$ and
  $\gamma=1$ is equivalent to vanilla KO.
\item Proof of Lemma~\ref{lemma:ko-distinct}: for Lasso-coefficient differences,
  the non-zero $W_j$ are distinct.
\item Proof that the $\pi_j$ are \emph{asymptotically} valid p-values (without
  any multiplicative correction): Lemma~\ref{lemma:pval:asympt}.
\item Statement and proof of a new general result about FDR control with
  quantile-aggregated p-values: Lemma~\ref{le.FDR-aggregation}.
\item Proof of Theorem~\ref{thm:fdr-control}.
\end{itemize}
Second, the results of some additional experiments are reported: 
\begin{itemize}
\item Additional experiments to show that the KO-GZ alternative aggregation
  procedure by \cite{gimenez_improving_2019} has decreasing power when the
  number $\kappa$ of knockoff vectors $\tilde{\bx}$ considered simultaneously
  increases (we compare $\kappa=2$ with $\kappa=3$).  We show empirically that
  this is not the case for AKO with respect to~$B$.
\item Empirical evidence for the near independence of p-values \( \pi_j \).
\item Additional figures for HCP 900 experiments.
\end{itemize}

\section{Detailed Proofs}

\subsection{Proof of Proposition~\ref{prop:equivalence}}
\label{ssec:proof-prop-equiv}
We begin by noticing that the function \( f: \bbR^+ \rightarrow \bbZ^+ \),
\( f(x) = \dfrac{\#\{k: W_k \leq -x \}}{p} \) is decreasing in \( x \).
This means the first step of both FDR control step-up procedures, that involves
ordering the intermediate p-values ascendingly, is the same as arranging the
knockoff statistic in descending order:
\( W_{(1)} \geq W_{(2)} \geq \cdots \geq W_{(p)}\).
Therefore from Eq.~\eqref{eq:pval-threshold} and the definition of \( \pi_j \) we
have:
\[
  \widehat{k} = \max \left\{ k: \dfrac{1 + \#\{i: W_{(i)} \leq -W_{(k)} \}}{p} \leq
    \dfrac{k\alpha}{p} \right\}
\]
(note that we can exclude all the \( \pi_{(k)} = 1 \) due to the fact that
\( \forall \ k\in [p], \alpha \in (0, 1): k\alpha/p < 1 \)).

This can be written as:
\[
  \widehat{k} = \max \left\{ k: \dfrac{1 + \#\{i: W_{(i)} \leq -W_{(k)} \}}{\#\{i:
      W_{(i)} \geq W_{(k)} \}} \leq \alpha \right\},
\]
since \( \#\{i: W_{(i)} \geq W_{(k)} \} = k \) because
\( \{W_{(j)}\}_{j \in [p]} \) is ordered descendingly and because of the
assumption that non-zero LCD statistics are distinct.
Furthermore, finding the maximum index \( k \) of the descending ordered
sequence is equivalent to finding the minimum value in that sequence, or
\begin{equation*}\label{eq:ako-threshold-1}
  \widehat{k} = \min \left\{ W_{(k)} > 0: \dfrac{1 + \#\{i: W_{(i)} \leq -W_{(k)} \}}{\#\{i:
      W_{(i)} \geq W_{(k)} \}} \leq \alpha \right\},
\end{equation*}
since all \( W_{(j)} \leq 0 \) (corresponding with \( \pi_{(k)} = 1 \)) have
been excluded.
Without loss of generality, we can write:
\begin{equation*}\label{eq:ako-threshold-2}
  \widehat{t}_+ = \min \left\{ t > 0: \dfrac{1 + \#\{i: W_{i} \leq -t \}}{\#\{i:
      W_{i} \geq t \}} \leq \alpha \right\} \, . 
\end{equation*}
This threshold \( \widehat{t}_+ \) is exactly the same as the definition of
threshold \(\tau_+ \) in Eq.~\eqref{eq:ko-threshold} from the original KO procedure.
\qed

\subsection{Proof of Lemma~\ref{lemma:ko-distinct}}
\label{ko-distinct}

\textbf{Setting.}
Let \( \bX \in \R^{n \times q} \),
\( \bm\beta^* \in \R^q, \lambda > 0, \sigma>0 \) be fixed.
Define
\[
  \by = \bX \bm\beta^* + \bm\epsilon
\] 
\[
  \forall \bm\beta \in \R^q, \qquad L(\bm\beta) := \lVert \by - \bX \bm\beta
  \rVert^2_2 + \lambda \lVert \bm\beta \rVert_1 \, .
\]
with \( \bm\epsilon \sim \cN(\mathbf{0}, \sigma\bI_n) \) the Gaussian noise and
\( \lVert \cdot \rVert_p \) the \( L_p \) norm.

\textbf{Classical Optimization Properties.}
Since $L$ is convex, non-negative, and tends to $+\infty$ at infinity, its
minimum over $\R^q$ exists and is attained (although may not be unique).
Since $L$ is convex, its minima are characterized by a first-order condition:
\[
  \notag
  \bhl \in \argmin_{\bm\beta \in \R^q} \set{ L(\bm\beta) }
  \Leftrightarrow \qquad 0 \in \partial L (\bm\beta)
\]
which is equivalent to 
\begin{gather}
\left\{
\begin{split}
  \exists \ \zh\in[-1,1]^q: \bX^{\top} \bX \bhl = \bX^{\top} \by -
  \frac{\lambda}{2} \bm\zh
  \\
  \forall j \text{ s.t. } (\bhl)_j \neq 0, \, \zh_j = \sign( (\bhl)_j )
\end{split}
\right. 
\label{eq.1st-order-condition}
\end{gather}

As shown by \cite[Section~4.5.1]{giraud_introduction_2014} for instance, the
fitted value $\fhl \in \R^n$ is uniquely defined:
\[ 
  \exists! \ \fhl \in \R^n 
\quad \text{such that} \quad
\forall 
\bm\bhl \in \argmin_{\bm\beta \in \R^q} \set{ L(\bm\beta) } 
\, , 
\qquad 
\fhl = \bX \bm\bhl
\, . 
\]
As a consequence, the equicorrelation set 
\[ 
\Jhl := \set{j \in \set{1,\ldots , q}:
\abs{ \bx_j^{\top} (\by - \bX \bm\bhl) } = \lambda/ 2 }
\] 
is uniquely defined. 
We also have,
\begin{equation}
\label{eq.Jhl-active}
\forall \ \bhl \in \argmin_{\bm\beta \in \R^q} \set{ L(\bm\beta) } 
\, , \qquad 
\set{j: (\bhl)_j \neq 0 } \subset \Jhl
\end{equation}
(but these two sets are not necessarily equal, and the former set may not be
uniquely defined).

Note that for every set $J \subset \set{1, \ldots , q}$ such that
$\forall j \notin J$, $(\bhl)_j = 0$, we have $\bX \bhl = \bX_J (\bhl)_J$ so that
$(\bX^{\top} \bX \bhl)_J = \bX^{\top}_J \bX_J (\bhl)_J$.
As a consequence, taking $J=\Jhl$, by eq. \refp{eq.1st-order-condition}
and~\eqref{eq.Jhl-active}, any minimizer $\bhl$ of $L$ over $\R^q$ satisfies
\begin{equation} 
\label{eq.1st-order.Jhl}
\bX^{\top}_{\Jhl} \bX_{\Jhl} (\bhl)_{\Jhl} = \bX^{\top}_{\Jhl} \by - \frac{\lambda}{2} \zh_{\Jhl} 
\end{equation}
for some $\zh_{\Jhl} \in \set{-1,1}^{\Jhl}$. 

If the matrix $\bX_{\Jhl}^{\top} \bX_{\Jhl}$ is non-singular (that is, if
$\bX_{\Jhl}$ is of rank $|\Jhl|$), then the argmin of $L$ is unique
\cite[Section~4.5.1]{giraud_introduction_2014}.

\begin{result}\label{res:le.W-distinct}
  For every $\bm\alpha \in \R^q \backslash \{0\}$, the event
  \begin{gather}\label{eq.le.W-distinct}
    \rank(\bX_{\Jhl}) = |\Jhl| \, , \quad \bm\alpha^{\top} \bhl = 0 \quad
    \text{and} \quad \exists \ j \in \set{1, \ldots , q} \, , \quad \alpha_j
    (\bhl)_j \neq 0 \, ,
  \end{gather}
  where $\{ \bhl \} = \argmin_{\bm\beta \in \R^q} \set{ L(\bm\beta) }$ is
  well-defined by the first property, has probability zero.
\end{result}
\begin{proof}
  Let $\Omega$ be the event defined by Eq.~\eqref{eq.le.W-distinct}.
  If $\Omega$ holds true, then there exists some $J \subset \set{1, \ldots, q}$
  and some $\zh \in \{-1,1\}^q$ such that $\rank(X_J)=|J|$, $(\bhl)_{J^c} = 0$,
  and
  \[ 
    \bX^{\top}_{J} \bX_{J} (\bhl)_{J} = \bX^{\top}_{J} \by - \frac{\lambda}{2} \zh_{J} 
    \, .
  \]
  Indeed, this is a consequence of Eq.~\eqref{eq.Jhl-active} and
  \eqref{eq.1st-order.Jhl}, by taking $J=\Jhl$ and $\bz$ such that
  $\bz_{\Jhl} = \sign \paren{ (\bhl)_{\Jhl} }$.
  Therefore, using that $\bX^{\top}_{J} \bX_{J}$ is non-singular, we get
  \begin{align*} 
    (\bhl)_{J} &= M(J) \bm\epsilon + v(J,\bz)
    \\ 
    \text{where} \qquad 
    M(J) &:= ( \bX^{\top}_{J} \bX_{J} )^{-1} \bX^{\top}_{J} 
    \\ 
    \text{and} \qquad 
    v(J,\bz) &:= ( \bX^{\top}_{J} \bX_{J} )^{-1} \bX^{\top}_{J} \bX \bm\beta^*
               - ( \bX^{\top}_{J} \bX_{J} )^{-1} \frac{\lambda}{2} \zh_{J} 
               \, , 
  \end{align*}
  hence 
  \[ \bm\alpha^{\top} \bhl = \bm\alpha_J^{\top} M(J) \bm\epsilon +
    \bm\alpha^{\top} v(J,\bz)
  \]
  follows a normal distribution with variance
  $\sigma^2 \bm\alpha_J^{\top} M(J) M(J)^{\top} \bm\alpha_J = \sigma^2 \norm{M(J)^{\top}
    \bm\alpha_J}^2$.
  Now, on $\Omega$, we also have the existence of some $j$ such that
  $\alpha_j (\bhl)_j \neq 0$.
  Since $(\bhl)_{J^c}=0$, we must have $j \in J$, which shows that
  $\bm\alpha_J \neq 0$.

  Overall, we have proved that
  \begin{align*} 
    \Omega &\subset \bigcup_{J \in \mathcal{J}, \bz \in \{-1,1\}^q } \Omega_{J,z} 
    \\ 
    \text{where} \qquad 
    \mathcal{J} &:= \set{j \in \set{1, \ldots, q}:
                  \rank(X_J)=|J| \text{ and } \alpha_J \neq 0}
    \\ 
    \text{and} \qquad 
    \Omega_{J,\bz} &:= \set{ \bm\alpha_J^{\top} M(J) \bm\epsilon + \alpha^{\top} v(J,\bz) = 0 } 
                     \, . 
  \end{align*}
  For every $J \in \mathcal{J}$, $M(J)^{\top} \bm\alpha_J \neq 0$ since
  $\alpha_J \neq 0$ and $M(J)$ is of rank $|J|$.
  As a consequence, for every $J \in \mathcal{J}$ and $\bz \in \set{-1,1}^p$,
  $\P( \Omega_{J,\bz} ) $ is the probability that a Gaussian variable with non-zero
  variance is equal to zero, so it is equal to zero.
  We deduce that 
  \[ 
    \P( \Omega ) \leq \sum_{J \in \mathcal{J}, \bz \in \{-1,1\}^q } \P (\Omega_{J,z})  =0
  \]
  since the sets $\mathcal{J}$ and $\set{-1,1}^q$ are finite.  
\end{proof}

Applying Result \ref{res:le.W-distinct} to the case where $\bX$ concatenates the
original $p$ covariates and their knockoff counterparts (hence $q=2p$), we get
that, apart from the event where $\bX_{\Jhl}$ is not full rank, for every
$j \in \set{1, \ldots, p}$, $W_j$ takes any fixed non-zero value with
probability zero (with $\alpha_j = \pm 1$, $\alpha_{j+p}=\pm 1$, $\alpha_k=0$
otherwise).

Similarly, the above lemma shows that for every
$j \neq j' \in \set{1, \ldots, p}$:
\[
  \P( \bX_{\Jhl} \text{ is full-rank and } \exists j \neq j', W_j = W_{j'}, W_j
  \neq 0, W_{j'} \neq 0 ) = 0.
\]
As a consequence, with probability 1, all the non-zero $W_j$ are distinct if
$X_{\Jhl}$ is full-rank. 
\qed 

\begin{remark}
  The proof of Result~\ref{res:le.W-distinct} is also valid for other noise
  distributions: it only assumes that the support of the distribution of
  $\bm\epsilon$ is not included into any hyperplane of $\R^n$.
\end{remark}

\subsection{Asymptotic Validity of Intermediate P-values}
We consider in this section an asymptotic regime where $p \to +\infty$. 

\begin{assumption}[Asymptotic regime $p \to +\infty$]
\label{hyp.asympt}
When $p$ grows to infinity, $n$, $\bX$, $\beta^*$, $\bm\epsilon$ and $\by$
all depend on $p$ implicitly.
We assume that for every integer $j \geq 1$, $\mathbbm{1}_{\beta_j^* = 0}$ does
not depend on $p$ (as soon as $p \geq j$), and that
\[ 
  \frac{\abs{\cS}}{p} 
  = \frac{\bigl\lvert \{ j \in [p] : \beta_j^* \neq 0  \} \bigr\rvert }{p} 
  \xrightarrow[p \rightarrow +\infty]{} 0 
  \, . 
\]
When making Assumption~\ref{assumption:ko-stat-dist}, we also assume that
$\bbP_0$ does not depend on~$p$.
\end{assumption}

\begin{lemma}
\label{lemma:pval:asympt}
If Assumptions~\ref{assumption:ko-stat-dist} and~\ref{hyp.asympt} hold true, then for
all \( j \geq 1 \) such that \( \beta^*_j = 0 \), the empirical p-value
\( \pi_j \) defined by Eq.~\eqref{eq:pval} is a valid p-value asymptotically,
that is,
\[
 \forall t \in [0,1] \, , \qquad \lim_{p \to +\infty} \bbP(\pi_j \leq t) \leq t
 \, . 
\]
\end{lemma}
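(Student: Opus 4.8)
The plan is to show that, as $p \to +\infty$, $\pi_j$ converges almost surely to an explicit limiting random variable $\pi_j^{\infty}$ whose law is super-uniform, and then to transfer this to the bound $\limsup_p \bbP(\pi_j \leq t) \leq t$. Throughout I fix $j$ with $\beta_j^* = 0$, so that $j \in \cS^c$ for every $p \geq j$ by Assumption~\ref{hyp.asympt}. Since the null statistics are i.i.d.\ with the fixed distribution $\bbP_0$ (Assumptions~\ref{assumption:ko-stat-dist} and~\ref{hyp.asympt}), I would first set up a coupling that realizes, on one probability space, the statistic $W_j = \xi_0$ and the remaining null statistics as the leading terms $\xi_1, \dots, \xi_m$ (with $m = \lvert \cS^c \rvert - 1$) of a single infinite i.i.d.\ $\bbP_0$ sequence $(\xi_i)_{i \geq 0}$, the samples being nested as $p$ grows. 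This coupling preserves, for each $p$, the marginal law of $\pi_j$ (which is all we need to compute $\bbP(\pi_j \leq t)$) while letting the strong law of large numbers operate across the growing dimension.

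Next I would analyze $\pi_j$ conditionally on $W_j$. When $W_j \leq 0$ we have $\pi_j = 1$ identically for all $p$. When $W_j = w > 0$, the definition~\eqref{eq:pval} decomposes as
\[
  \pi_j = \frac{1}{p} + \frac{\#\{k \in \cS : W_k \leq -w\}}{p} + \frac{m}{p}\,\widehat{F}_m(-w),
\]
with $\widehat{F}_m$ the empirical cdf of $\{W_k\}_{k \in \cS^c \setminus \{j\}}$ as in the proof of Lemma~\ref{lemma:pval}. Here $1/p \to 0$, the signal term is bounded by $\lvert \cS \rvert / p \to 0$ (the only place the hypothesis $\lvert \cS \rvert / p \to 0$ is used), $m/p \to 1$, and $\widehat{F}_m(-w) \to F_0(-w)$ almost surely by the strong law of large numbers applied to $\xi_1, \dots, \xi_m$, which are independent of $\xi_0$. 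Hence $\pi_j \to \pi_j^{\infty}$ almost surely, where $\pi_j^{\infty} := F_0(-W_j)\,\mathbbm{1}_{W_j > 0} + \mathbbm{1}_{W_j \leq 0}$ and $F_0$ denotes the cdf of $\bbP_0$.

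I would then compute the law of the limit. For $t \in [0,1)$ we have $\bbP(\pi_j^{\infty} \leq t) = \bbP\bigl(F_0(-W_j) \leq t,\, W_j > 0\bigr)$. By Remark~\ref{remark:P0-symmetric}, $\bbP_0$ is symmetric about $0$, so $V := -W_j$ also has law $\bbP_0$ with cdf $F_0$; dropping the constraint $\{V < 0\}$ gives $\bbP(\pi_j^{\infty} \leq t) \leq \bbP(F_0(V) \leq t)$. A standard super-uniformity argument then finishes this step: the set $\{v : F_0(v) \leq t\}$ is a left half-line, whose $\bbP_0$-mass is at most $t$ by monotonicity and right-continuity of $F_0$, so $\bbP(F_0(V) \leq t) \leq t$ with no continuity assumption on $F_0$ required. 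The case $t = 1$ is trivial, giving $\bbP(\pi_j^{\infty} \leq t) \leq t$ for all $t \in [0,1]$.

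Finally, since almost-sure convergence implies convergence in distribution, the Portmanteau theorem applied to the closed set $[0,t]$ yields $\limsup_p \bbP(\pi_j \leq t) \leq \bbP(\pi_j^{\infty} \leq t) \leq t$, which establishes the lemma. I expect the main obstacle to be the first step: making the almost-sure limit rigorous even though the design, the knockoffs, and hence the statistics genuinely change with $p$; this is exactly what the coupling resolves by fixing $W_j$ and nesting the null sample so the strong law applies. A secondary care point is that $\mathbbm{1}_{\pi_j \leq t}$ is discontinuous, so one cannot simply pass the almost-sure limit inside the probability; the Portmanteau inequality for closed sets is what bridges this gap, and also explains why only $\limsup_p \bbP(\pi_j \leq t) \leq t$, rather than an exact limit, is obtained in general.
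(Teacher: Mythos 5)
Your proof is correct and follows essentially the same route as the paper's: both arguments rest on the decomposition of $\pi_j$ into the negligible signal part and the empirical cdf $\widehat{F}_m$ of the null statistics, the strong law of large numbers, the symmetry of $\bbP_0$, the super-uniformity bound $\bbP\bigl(F_0(W_j) \leq t\bigr) \leq t$, and a Portmanteau step. The only substantive differences are matters of bookkeeping: you make explicit the coupling that justifies speaking of almost-sure convergence while the underlying variables change with $p$ (a point the paper glosses over), and you apply Portmanteau unconditionally to the limit $\pi_j^{\infty}$, whereas the paper conditions on $W_j$, applies Portmanteau conditionally, and then exchanges $\limsup$ and expectation via reverse Fatou.
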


Note that our proof of Theorem~\ref{thm:fdr-control} in
Section~\ref{sec.pr.ThmPpal} relies on the use of Lemma~\ref{lemma:pval} with
$t$ that can be of order $1/p$.
Therefore, Lemma~\ref{lemma:pval:asympt} above is not sufficient for our needs.
Nevertheless, it still provides a interesting insight about the $\pi_j$, and
justifies (asymptotically) their name, which is why we state and prove this
result here.

\begin{proof}
   By definition, \( \pi_j \leq 1 \) almost surely, so the result holds when
   \( t = 1 \).  Let us now focus on the case where \( t \in [0, 1) \).
   Let \( F_0 \) denote the c.d.f. of \( \bbP_0 \), the common distribution of
   the null statistics
   \(\discset{W_j}_{1 \leq j \leq p \,/\, \beta^*_j = 0} \), which exists by
   Assumption~\ref{assumption:ko-stat-dist}.
   Let \( j \geq 1 \) such that \( \beta^*_j = 0 \) be fixed, and assume that
   $p \geq j$ is large enough so that $\abs{\cS^c} \geq 2$.  Let
   $m = \lvert \cS^c \rvert - 1 \geq 1$ as in the proof of
   Lemma~\ref{lemma:pval}.
   Note that $m$ depends on $p$, and $m/p \to 1$ as $p \to +\infty$ by
   Assumption~\ref{hyp.asympt}, hence $m \to +\infty$ as $p \to +\infty$.

   By definition of \( \pi_j \), when \( W_j > 0 \) we have:
   \begin{align*}
     \pi_j &= \dfrac{1 + \#\{k \in [p]: W_k \leq -W_j\}}{p} \\
     \text{(since \( W_j > 0 > - W_j \))} &= \dfrac{1 + \#\{k \in \cS: W_k \leq -W_j\} +
             \#\{k \in \cS^c\setminus\{ j \}: W_k \leq -W_j\}}{p} \\
           &\geq \dfrac{\#\{ k \in \cS^c\setminus\{ j \}: W_k \leq -W_j \} }{p} 
        \\ &= \frac{\widehat{F}_{m} (-W_j) }{\alpha_p} 
             \numberthis \label{eq:lower-bound}
   \end{align*}
   where $\alpha_p \egaldef \dfrac{p}{m}$ and for all $u \in \R$,
   \[
     \widehat{F}_{m} (u) \egaldef \dfrac{\#\{ k \in \cS^c\setminus\{ j \}: W_k
       \leq u \} }{m}     
   \]
   is the empirical cdf of \( \{ W_k\}_{k \in \cS^c \setminus \{j\}} \).

   Now, since \( \{ W_k\}_{k \in \cS^c \setminus \{j\}} \) are \iid with
   distribution \( \bbP_0 \) by Assumption~\ref{assumption:ko-stat-dist} , the
   law of large numbers implies that, for all \( u \in \bbR \),
   \begin{align*}
     \widehat{F}_{m} (u)
     \xrightarrow[p \to +\infty]{\text{a.s.}} 
     F_0(u) 
     \, . 
   \end{align*}
   Since we assume $\lim_{p \to +\infty} \abs{\cS}/p = 0$,
   $\lim_{p\to+\infty} \alpha_p = 1$ and we get that for all \( u \in \bbR \),
   \begin{align*}
     \frac{1}{\alpha_p} \widehat{F}_{m} (u)
     \xrightarrow[p \to +\infty]{\text{a.s.}} 
     F_0(u) 
     \, . 
   \end{align*}
   Since $W_j$ is independent from \( \{ W_k\}_{k \in \cS^c \setminus \{j\}} \),
   this result also holds true \emph{conditionally to $W_j$}, with $u=-W_j$.
   Given that almost sure convergence implies convergence in distribution, we
   have: conditionally to $W_j$,
   \begin{equation}
     \label{pr.main-lemma:pval.cv-distrib}
     \frac{1}{\alpha_p} \widehat{F}_{m} (-W_j)
     \xrightarrow[p \to +\infty]{\text{(d)}} 
     F_0(-W_j) \stackrel{(d)}{=} F_0(W_j) 
   \end{equation}
   where the latter equality comes from the fact that $W_j$ has a symmetric
   distribution, as shown in Remark~\ref{remark:P0-symmetric}.

   So, when $W_j>0$, for every $t \in [0,1)$, 
   \begin{align}
     \notag 
     \limsup_{p \to +\infty} \bbP( \pi_j \leq t \,\vert\, W_j ) 
     &\leq \limsup_{p \to +\infty} \bbP \left( \frac{\widehat{F}_{m} (-W_j) }{\alpha_p} \leq t \,\Big\vert\, W_j \right) 
       \qquad \text{by Eq.~\eqref{eq:lower-bound}}
     \\
     &\leq \mathbbm{1}_{F_0(W_j) \leq t} 
       \label{eq:convergence2}
   \end{align}
   by Eq.~\eqref{pr.main-lemma:pval.cv-distrib} combined with the Portmanteau theorem. 

   Therefore, for every $t \in [0,1)$, 
   \begin{align*}
     \limsup_{p \rightarrow +\infty} \bbP(\alpha_p\pi_j \leq t)
     &= \limsup_{p \rightarrow +\infty} \Bigl\{ \bbP(\alpha_p\pi_j \leq t \
       \text{and} \ W_j > 0 )
       + \underbrace{\bbP(\alpha_p\pi_j \leq t \ \text{and} \ W_j \leq 0) }_{\substack{=0 \text{
       since } \alpha_p \geq 1 > t \\ \text{ and } \pi_j = 1 \text{ when } W_j \leq 0}} \Bigr\}
     \\
     &= \limsup_{p \rightarrow +\infty} \bbE[\bbP(\alpha_p\pi_j \leq t \mid
       W_j) \mathbbm{1}_{W_j > 0 }] 
       \\
     &\leq \bbE\left[ \limsup_{\abs{\cS^c} \rightarrow +\infty} \{ \bbP(\alpha_p\pi_j \leq t
       \mid W_j) \mathbbm{1}_{W_j > 0}) \} \right] 
       \\
     &\leq \bbP \bigl( F_0(W_j) \leq t \bigr)
       \qquad \text{by Eq.~\eqref{eq:convergence2}}
     \\ 
     &\leq t
     \, ,    
   \end{align*}
   which concludes the proof. 
 \end{proof}

\subsection{A General FDR Control with Quantile-aggregated P-values}
The proof of Theorem~\ref{thm:fdr-control} relies on an adaptation of results
proved by \cite[Theorems 3.1 and~3.3]{meinshausen_p-values_2009} about
aggregation of p-values.
The results of \cite{meinshausen_p-values_2009}, whose proof relies on the
proofs of \cite{benjamini_control_2001}, are stated for randomized p-values
obtained through sample splitting.
The following lemma shows that they actually apply to any family of p-values.
\begin{lemma}
\label{le.FDR-aggregation}
Let $(\pi_j^{(b)})_{1 \leq j \leq p , 1\leq b \leq B}$ be a family of random
variables with values in $[0,1]$.
Let $\gamma \in (0,1]$, $\bar{\alpha} \in [0,1]$ and $\mathcal{N} \subset [p]$
be fixed.
Let us define
\begin{align*}
  \forall j \in [p] \, , \quad 
  Q_j 
  &\egaldef \frac{p}{\gamma} q_{\gamma} \bigl( \{ \pi_j^{(b)} \,:\,  1 \leq b \leq B \} \bigr) 
    \quad \text{where } \quad q_{\gamma}(\cdot) \text{ is the $\gamma$-quantile function,} 
  \\
  \widehat{h} 
  &\egaldef \max \bigl\{ i \in [p] \, : \, Q_{(i)} \leq i \bar{\alpha} \bigr\} 
    \quad \text{where} \quad Q_{(1)} \leq \cdots \leq Q_{(p)} 
    \, , 
  \\ 
  \text{and} \quad 
  \widehat{S} 
  &\egaldef \bigl\{ j \in [p] \,:\, Q_j \leq Q_{(\widehat{h})} \bigr\} 
    \, . 
\end{align*} 
Then, 
\begin{align}
  \label{eq.le.FDR-aggregation.res}
  \bbE \left[ \frac{\lvert \widehat{S} \cap \mathcal{N} \rvert}{\lvert \widehat{S} \rvert \vee 1} \right] 
  \leq \sum_{j=1}^{p-1} \left( \frac{1}{j} - \frac{1}{j+1} \right) F(j) + \frac{F(p)}{p} 
  \\
  \notag 
  \text{where} \qquad \forall j \in [p] \, , \, \qquad 
  F(j) \egaldef \frac{1}{\gamma} \frac{1}{B} \sum_{b=1}^B \sum_{i \in \mathcal{N}} 
  \bbP \left( \pi_i^{(b)} \leq \frac{j \bar{\alpha} \gamma}{p} \right) 
  \, . 
\end{align}
As a consequence, if some $C \geq 0$ exists such that 
\begin{equation}
  \label{eq.le.FDR-aggregation.cond-simplified}
  \forall t \geq 0, \forall b \in [B] \, , \, \forall i \in \mathcal{N} \, , 
  \qquad 
  \bbP \left( \pi_i^{(b)} \leq t \right) \leq C t \, , 
\end{equation}
then we have 
\begin{equation}
  \label{eq.le.FDR-aggregation.res-simplified}
  \bbE \left[ \frac{\lvert \widehat{S} \cap \mathcal{N} \rvert}{\lvert \widehat{S} \rvert \vee 1} \right]  
  \leq  \frac{\lvert \mathcal{N} \rvert C}{p}  \left( \sum_{j=1}^{p} \frac{1}{j} \right) \bar{\alpha} 
\, . 
\end{equation}

\end{lemma}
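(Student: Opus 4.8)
The plan is to run a Benjamini--Yekutieli-type step-up analysis on the rescaled variables $Q_j$, which play the role of p-values tested against the slope $\bar\alpha$, and to feed in the quantile-aggregation bound of \cite{meinshausen_p-values_2009} only at the very end. Two ingredients drive the argument: a deterministic control of the event $\{j \in \widehat{S}\}$ in terms of $Q_j$ and the realized number of rejections $R \egaldef \lvert \widehat{S}\rvert$, and a Markov-type inequality relating the $\gamma$-quantile $Q_i$ to the quantities $\bbP(\pi_i^{(b)} \le \cdot)$ appearing in $F$. I would first write the FDR as a double sum, $\bbE[\,\lvert \widehat{S}\cap\mathcal{N}\rvert / (\lvert \widehat{S}\rvert \vee 1)\,] = \sum_{i \in \mathcal{N}} \sum_{r=1}^p \frac1r \bbP(i \in \widehat{S}, R=r)$. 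The key deterministic observation is the inclusion $\{i \in \widehat{S}\} \subseteq \{Q_i \le R\bar\alpha\}$: if $i \in \widehat{S}$ then $Q_i \le Q_{(\widehat{h})} \le \widehat{h}\,\bar\alpha$, and since $\lvert \widehat{S}\rvert \ge \widehat{h}$ this gives $Q_i \le R\bar\alpha$. Phrasing it through $R$ rather than $\widehat{h}$ absorbs any ties among the $Q_j$, so no genericity hypothesis is needed.

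The technical heart is then decoupling $Q_i$ from the random index $R$ without any independence. I would expand the coefficient as $\frac1r = \sum_{k=r}^{p-1}\frac{1}{k(k+1)} + \frac1p$, exchange the order of summation, and exploit both the disjointness of the events $\{R=r\}$ and the monotonicity $Q_i \le r\bar\alpha \le k\bar\alpha$ valid for $r \le k$. This yields, for each fixed $i$, the bound $\sum_{r=1}^k \bbP(Q_i \le r\bar\alpha, R=r) \le \bbP(Q_i \le k\bar\alpha)$, and hence $\sum_{r=1}^p \frac1r \bbP(i \in \widehat{S}, R=r) \le \sum_{k=1}^{p-1}\frac{1}{k(k+1)}\bbP(Q_i \le k\bar\alpha) + \frac1p \bbP(Q_i \le p\bar\alpha)$. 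It is precisely this Abel-type rearrangement that replaces the independence used in plain BH, at the price of the harmonic weights.

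It remains to bound $\bbP(Q_i \le k\bar\alpha)$. Since $Q_i = \frac{p}{\gamma}q_\gamma(\{\pi_i^{(b)}\})$, the event $Q_i \le t$ is the same as $q_\gamma(\{\pi_i^{(b)}\}) \le t\gamma/p$, and the empirical $\gamma$-quantile being at most $s$ forces at least $\lceil \gamma B\rceil$ of the replicates to lie below $s$; applying Markov's inequality to that count gives $\bbP(Q_i \le t) \le \frac{1}{\gamma B}\sum_{b} \bbP(\pi_i^{(b)} \le t\gamma/p)$. Summing over $i \in \mathcal{N}$ with $t = k\bar\alpha$ reproduces exactly $F(k)$, and substitution yields \eqref{eq.le.FDR-aggregation.res} after recognizing $\frac{1}{k(k+1)} = \frac1k - \frac{1}{k+1}$. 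For the simplified conclusion I would plug the linear hypothesis $\bbP(\pi_i^{(b)} \le t) \le Ct$ into $F$, obtaining $F(j) \le \lvert \mathcal{N}\rvert C\bar\alpha\, j / p$, and finish with the elementary identity $\sum_{j=1}^{p-1}\frac{1}{j+1} + 1 = \sum_{j=1}^p \frac1j$, which collapses the weighted sum to $\frac{\lvert \mathcal{N}\rvert C}{p}\paren{\sum_{j=1}^p 1/j}\bar\alpha$.

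The step I expect to be the main obstacle is the decoupling in the second paragraph: controlling the joint behavior of $Q_i$ and the data-dependent threshold $R$ under \emph{arbitrary} dependence among the $Q_j$. The summation-by-parts trick is what makes this possible, and it is what forces the harmonic factor $\sum_{j=1}^p 1/j$ into the bound. A secondary point needing care is the exact convention for the empirical quantile $q_\gamma$, so that $\{q_\gamma \le s\}$ genuinely implies that a fraction $\gamma$ of the replicates fall below $s$; this is precisely what legitimizes the Markov step that converts $Q_i$ back into the per-replicate probabilities defining $F$.
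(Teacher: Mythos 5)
Your proposal is correct and follows essentially the same route as the paper's proof: the same key inclusion $\{i \in \widehat{S},\, \lvert \widehat{S} \rvert = r\} \subseteq \{Q_i \leq r\bar{\alpha}\}$, the same Benjamini--Yekutieli-style summation by parts that produces the harmonic weights (the paper bins the values of $Q_i$ into intervals $((j-1)\bar{\alpha}, j\bar{\alpha}]$ and tracks cumulative sums $\overline{F}(j)$, while you expand $1/r$ telescopically and exchange summation order --- the same computation), and the identical Markov-inequality step converting the quantile event $\{Q_i \leq t\}$ into the per-replicate probabilities defining $F$. The only substantive nicety is that your phrasing via $\lvert \widehat{S} \rvert \geq \widehat{h}$ makes the treatment of ties among the $Q_j$ explicit, a point the paper's write-up passes over with an equality sign.
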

Let us emphasize that Lemma~\ref{le.FDR-aggregation} can be useful in general,
well beyond knockoff aggregation.
To the best of our knowledge, Lemma~\ref{le.FDR-aggregation} is new.
In particular, the recent preprint by \cite{Rom_DiC:2019}, that studies
p-values aggregation procedures, focuses on FWER controlling procedures,
whereas Lemma~\ref{le.FDR-aggregation} provides an FDR control for a less
conservative procedure.
\begin{proof}
For every $i,j,k \in [p]$, let us define 
\[ 
  p_{i,j,k} = 
  \begin{cases}
    \bbP \Bigl( Q_i \in \bigl( (j-1) \bar{\alpha}  , j\bar{\alpha}  \bigr] \, , \, i \in \widehat{S} \text{ and } \lvert \widehat{S} \rvert = k \Bigr)
    \qquad &\text{if } j \geq 2
    \\ 
    \bbP \bigl( Q_i \in [0 , \bar{\alpha} ] \, , \, i \in \widehat{S} \text{ and } \lvert \widehat{S} \rvert = k \bigr)
    \qquad &\text{if } j = 1 
    \, .  
  \end{cases}
\]
Then, 
\begin{align*}
  \frac{\lvert \widehat{S} \cap \mathcal{N} \rvert}{\lvert \widehat{S} \rvert \vee 1}
  &= \sum_{k=1}^p \mathbbm{1}_{\lvert \widehat{S} \rvert =k} \frac{\sum_{i \in \mathcal{N}} \mathbbm{1}_{i \in \widehat{S}}}{k} 
  \\
  &= \sum_{i \in \mathcal{N}} \sum_{k=1}^p \frac{1}{k} \mathbbm{1}_{\lvert \widehat{S} \rvert =k \text{ and } i \in \widehat{S}}
  \\
  &= \sum_{i \in \mathcal{N}} \sum_{k=1}^p \frac{1}{k} \mathbbm{1}_{\lvert \widehat{S} \rvert =k \text{ and } i \in \widehat{S} \text{ and } 0 \leq Q_i \leq k \bar{\alpha} }
\end{align*}
since $i \in \widehat{S}$ and $\lvert \widehat{S} \rvert =k$ implies that $Q_i \leq Q_{(\widehat{h})} \leq \widehat{h} \bar{\alpha} = k \bar{\alpha} $. 
Taking an expectation and writing that 
\[ 
\mathbbm{1}_{0 \leq Q_i \leq k \bar{\alpha} } 
= \mathbbm{1}_{Q_i \in [0,\bar{\alpha} ]} +  \sum_{j = 2}^k \mathbbm{1}_{Q_i \in ((j-1)\bar{\alpha} ,j\bar{\alpha} ]}
\, , 
\]
we get ---following the computations of \cite[proof of
Theorems~3.3]{meinshausen_p-values_2009}, which themselves rely on the ones of
\cite{benjamini_control_2001}---,
\begin{gather*}
  \begin{align*}
    \bbE \left[ \frac{\lvert \widehat{S} \cap \mathcal{N} \rvert}{\lvert \widehat{S} \rvert \vee 1} \right]  
    &\leq 
      \sum_{i \in \mathcal{N}} \sum_{k=1}^p \frac{1}{k} \sum_{j=1}^k p_{i,j,k} 
      = \sum_{i \in \mathcal{N}} \sum_{j=1}^p \sum_{k=j}^p \frac{1}{k} p_{i,j,k} \\
    &\leq \sum_{i \in \mathcal{N}} \sum_{j=1}^p \sum_{k=j}^p \frac{1}{j} p_{i,j,k} 
      = \sum_{j=1}^p \frac{1}{j}  \underbrace{ \sum_{i \in \mathcal{N}} \sum_{k=j}^p p_{i,j,k} }_{= \overline{F}(j) - \overline{F}(j-1) \mathbbm{1}_{j \geq 2}}    
  \end{align*}
\\
\text{where} \qquad \forall j \in \{1, \ldots, p\} \, , \qquad 
\overline{F}(j) \egaldef  
\sum_{i \in \mathcal{N}} \sum_{j'=1}^j  \sum_{k=1}^p p_{i,j',k} 
\, . 
\end{gather*}
Since the above upper bound is equal to
\begin{gather}
\notag 
\overline{F}(1) + \sum_{j=2}^p \frac{1}{j} \bigl[ \, \overline{F}(j) - \overline{F}(j-1) \bigr]
= \sum_{j=1}^p \left( \frac{1}{j} - \frac{1}{j+1} \right) \overline{F}(j) 
+ \frac{ \overline{F}(p) }{p} \, , 
\\
\text{we get that} \qquad \qquad 
\bbE \left[ \frac{\lvert \widehat{S} \cap \mathcal{N} \rvert}{\lvert \widehat{S} \rvert \vee 1} \right]  
\leq \sum_{j=1}^p \left( \frac{1}{j} - \frac{1}{j+1} \right) \overline{F}(j) 
+ \frac{ \overline{F}(p) }{p}
\label{eq.pr.le.FDR-aggregation}
\, . 
\end{gather}
Notice also that 
\[ 
\overline{F}(j) 
= \sum_{i \in \mathcal{N}} \bbP( Q_i \leq j \bar{\alpha}  \text{ and } i \in \widehat{S} )
\leq \sum_{i \in \mathcal{N}} \bbP( Q_i \leq j \bar{\alpha}  )
\enspace . 
\]
Now, as done by \cite[proof of Theorems~3.1]{meinshausen_p-values_2009}, we
remark that $Q_i \leq j \bar{\alpha} $ is equivalent to
\[ 
\frac{1}{B} \abs{ \left\{ b \in [B] \,:\, \frac{p \pi_i^{(b)}}{\gamma} \leq j \bar{\alpha} \right\} }  
= \frac{1}{B} \sum_{b=1}^B \mathbbm{1}_{p \pi_i^{(b)} \leq j \bar{\alpha}  \gamma} \geq \gamma 
\]
so that 
\begin{align*}
\bbP( Q_i \leq j \bar{\alpha}  )
&= \bbP \left( \frac{1}{B} \sum_{b=1}^B \mathbbm{1}_{p \pi_i^{(b)} \leq j
                                   \bar{\alpha}  \gamma} \geq \gamma  \right) \\
&\leq \frac{1}{\gamma} \bbE \left[ \frac{1}{B} \sum_{b=1}^B \mathbbm{1}_{p \pi_i^{(b)} \leq j \bar{\alpha}  \gamma}  \right] 
\qquad \text{by Markov inequality} 
\\
&= \frac{1}{\gamma}  \frac{1}{B} \sum_{b=1}^B \bbP \left( p \pi_i^{(b)} \leq j \bar{\alpha}  \gamma \right) 
\, .  
\end{align*}
Therefore, 
\[ 
\overline{F}(j) 
\leq \sum_{i \in \mathcal{N}} \frac{1}{\gamma}  \frac{1}{B} \sum_{b=1}^B \bbP \left( p \pi_i^{(b)} \leq j \bar{\alpha}  \gamma \right) 
= F(j) 
\, , 
\]
so that Eq.~\eqref{eq.pr.le.FDR-aggregation} implies
Eq.~\eqref{eq.le.FDR-aggregation.res}.

If condition~\eqref{eq.le.FDR-aggregation.cond-simplified} holds true, 
then, for every $j \in [p]$, 
\[ 
F(j) 
\leq \frac{\lvert \mathcal{N} \rvert C }{\gamma} \frac{j \bar{\alpha} \gamma}{p}
= \frac{\lvert \mathcal{N} \rvert C \bar{\alpha}}{p} j 
\, , 
\]
hence Eq.~\eqref{eq.le.FDR-aggregation.res} shows that 
\begin{align*}
\bbE \left[ \frac{\lvert \widehat{S} \cap \mathcal{N} \rvert}{\lvert \widehat{S} \rvert \vee 1} \right] 
\leq \sum_{j=1}^{p-1} \frac{F(j)}{j (j+1)} + \frac{F(p)}{p} 
\leq \frac{\lvert \mathcal{N} \rvert C \bar{\alpha}}{p} 
\sum_{j=1}^{p-1} \frac{1}{j+1} + \frac{\lvert \mathcal{N} \rvert C \bar{\alpha}}{p} 
= \frac{\lvert \mathcal{N} \rvert C \bar{\alpha}}{p}  \left( \sum_{j=1}^{p} \frac{1}{j} \right)
\, ,
\end{align*}
which is the desired result. 
\end{proof}

\subsection{Proof of Theorem~\ref{thm:fdr-control}}
\label{sec.pr.ThmPpal}
We can now prove Theorem~\ref{thm:fdr-control}. 
We apply Lemma~\ref{le.FDR-aggregation} 
with $\bar{\alpha} = \beta(p) \alpha$, 
$\mathcal{N} = \cS^c$, 
so that $\widehat{S} = \widehat{\mathcal{S}}_{AKO+BY}$. 
Since the $\pi_j^{(b)}$, $b=1, \ldots, B$, have the same distribution as $\pi_j$, 
by Lemma~\ref{lemma:pval}, condition~\eqref{eq.le.FDR-aggregation.cond-simplified} holds true with $C=\kappa p / \lvert \cS^c \rvert$, 
and Eq.~\eqref{eq.le.FDR-aggregation.res-simplified} 
yields the desired result. 
\qed

Note that an FDR control for AKO such as Theorem~\ref{thm:fdr-control} cannot
be obtained straightforwardly from the arguments of
\cite{barber_controlling_2015} and \cite{candes_panning_2018}.
One key reason for this is that their proof relies on a reordering of the
features according to the values of $(\abs{W_j})_{j \in [p]}$
\cite[Section~5.2]{barber_controlling_2015}, such a reordering being permitted
since the signs of the $W_j$ are iid coin flips \emph{conditionally to} the
$(\abs{W_j})_{j \in [p]}$ \cite[Lemma~2]{candes_panning_2018}.
In the case of AKO, we must handle the $(W_j^{(b)})_{j \in [p]}$
\emph{simultaneously for all $b \in [B]$}, and conditioning with respect to
$(\lvert W_j^{(b)} \rvert)_{j \in [p] , b \in [B]}$ may reveal some information
about the signs of the $(W_j^{(b)})_{j \in [p]}$ as soon as $B > 1$.
At least, it does not seem obvious to us that the key result of
\cite[Lemma~2]{candes_panning_2018} can be proved \emph{conditionally to} the
$(\lvert W_j^{(b)} \rvert )_{j \in [p] , b \in [B]}$ when $B>1$, so that the
proof strategy of \cite{candes_panning_2018} breaks down in the case of AKO
with $B>1$.

%%%%%%%%%%%%%%%%%%%%%%%%%%%%%%%%%%%%%%%%%%%%%%%%%%%%%%%%%%%%%%%%%%%%%%%%%%%%%%%%

\section{Additional Experimental Results}
\subsection{Demonstration of Aggregated Multiple Knockoff vs. Simultaneous
  Knockoff}
\label{ako-vs-sko}

Using the same simulation settings as in
Section~\ref{ssec:synthetic-experiment} with \( n = 500, p = 1000 \) and
varying simulation parameters to generate Figure~\ref{fig:fdr-power} in the
main text, we benchmark only aggregation of multiple knockoffs (AKO) with 5 and 10
bootstraps (\( B=5 \) and \( B=10 \)) and compare with simultaneous knockoffs
with 2 and 3 bootstraps (\( \kappa=2 \) and \( \kappa=3 \)).
Results in Figure~\ref{fig:ako-vs-ko-gz} show that while increasing the number
of knockoff bootstraps makes simultaneous knockoffs more conservative, doing so
with AKO makes the algorithm more powerful (and in the worst case retains the
same power with fewer bootstraps).

\subsection{Empirical Evidence on the Independence of Aggregated P-values
  \( \mathbf{\bar{\pi}} \)}
\label{independence-pval}

Using the same simulation settings as in
Section~\ref{ssec:synthetic-experiment} with
\( n = 500, p = 1000, \rho=0.6, \text{snr}=3.0, \text{sparsity}=0.06 \) we
generate 100 observations of \emph{aggregated} p-values \( \bar{\bm\pi} \).
Then, we compute the Spearman rank-order correlation coefficient of the
\emph{Null} \( \bar{\pi}_j \) for these 100 observations along with their
two-sided p-value (for a hypothesis test whose null hypothesis is that two sets
of data are uncorrelated).

The results are illustrated in Figure~\ref{fig:spearman}: the Spearman
correlation values are concentrated around zero, while the distribution of
associated p-values seems to be a mixture between a uniform distribution and a
small mixture component consisting of mostly non-null p-values.
This indicates near independence between the aggregated p-values using
quantile-aggregation \cite{meinshausen_p-values_2009}, hence justifies our use
of BH step-up procedure for selecting FDR controlling threshold in the AKO
algorithm.

\begin{remark}
  Again, it is worth noticing that the empirical evidence we have shown is only
  done in a setting with a Toeplitz structure for the covariance matrix.
  However, as explained in the main text, this correlation setting is usually
  found in neuroimaging and genomics data.
  Hence, we believe that assuming short-distance correlations between the
  $(X_j)_{j \in [p]}$ is a mild assumption, which should be satisfied in the
  practical scenarios where we want to apply our algorithm.
\end{remark}

The decoding maps returned by the KO, AKO and DL inference procedures
are presented in Figure~\ref{fig:hcp-all}.
As quantified by the Jaccard index in the main text, we observe that the AKO
solution is typically closer to an external method based on the desparsified
lasso (DL).
Moreover, AKO is also typically more sensitive than KO alone.

The seven classification problems are the following:
\begin{itemize}
\item Emotion: predict whether the participant watches an angry face or a geometric shape.
\item Gambling: predict whether the participant gains or loses gambles.
\item Motor foot: predict whether the participant moves the left or right foot.
\item Motor hand: predict whether the participant moves the left or right hand.
\item Relational: predict whether the participant matches figures or identified
  feature similarities.
\item Social: predict whether the participant watches a movie with social behavior or not.
\item Working Memory: predict whether the participant does a 0-back or a 2-back task.
\end{itemize}

%%%%%%%%%%%%%%%%%%%%%%%%%%%%%%%%%%%%%%%%%%%%%%%%%%%%%%%%%%%%%%%%%%%%%%%%%%%%%%%%

\begin{figure}[h]
  \centering
  \includegraphics[width=0.7\textwidth]{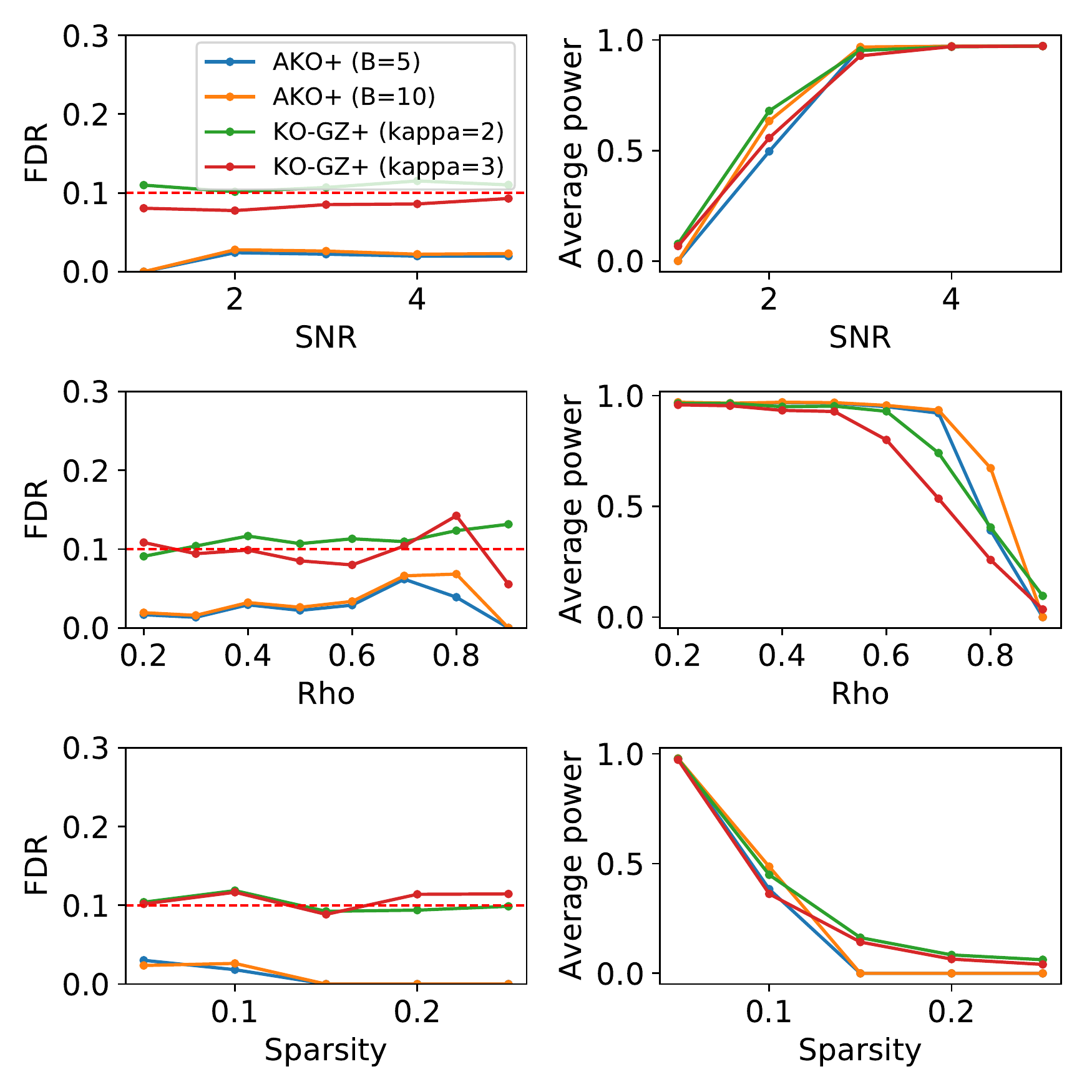}
  \caption {\textbf{Aggregation of multiple knockoffs ($B=5$ and $B=10$)
      vs. simultaneous knockoffs (\( \kappa =2\) and \( \kappa =3\)).}
    A clear loss in statistical power is demonstrated in the latter
    method when increasing the number of bootstraps $\kappa$, while the former (AKO) shows
    the opposite: with $B=10$ bootstraps there are small, yet consistent
    gains in the number of true detections compared to using only $B=5$
    bootstraps.  }
  \label{fig:ako-vs-ko-gz}
\end{figure}

\begin{figure}[h]
  \centering
  \includegraphics[width=0.8\textwidth]{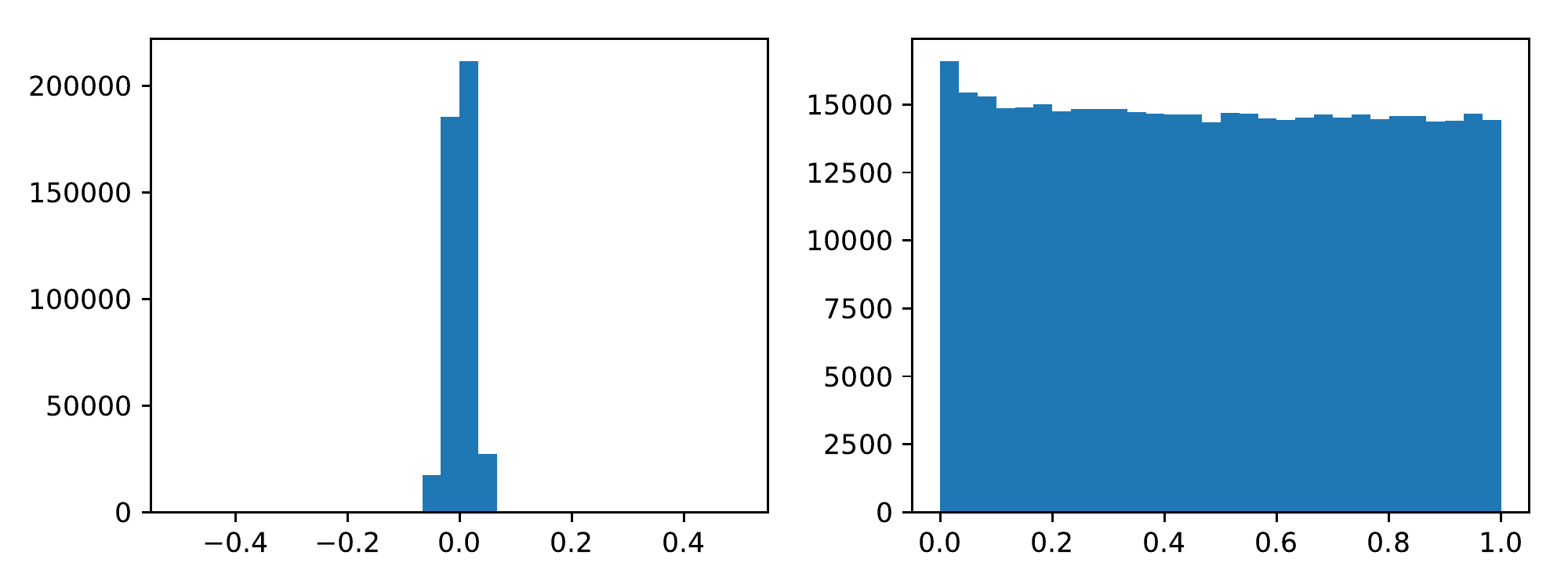}
  \caption {\textbf{Left: Histogram of Spearman correlation values for 100
      samples of null aggregated p-values \( \mathbf{\bar{\pi}_j} \). Right:
      Histogram of corresponding p-values for the Spearman correlation}}
  \label{fig:spearman}
\end{figure}

\begin{figure}[h]
\begin{tabular}{rl}
\raisebox{1cm}{Emotion} & \includegraphics[width=0.6\textwidth]{{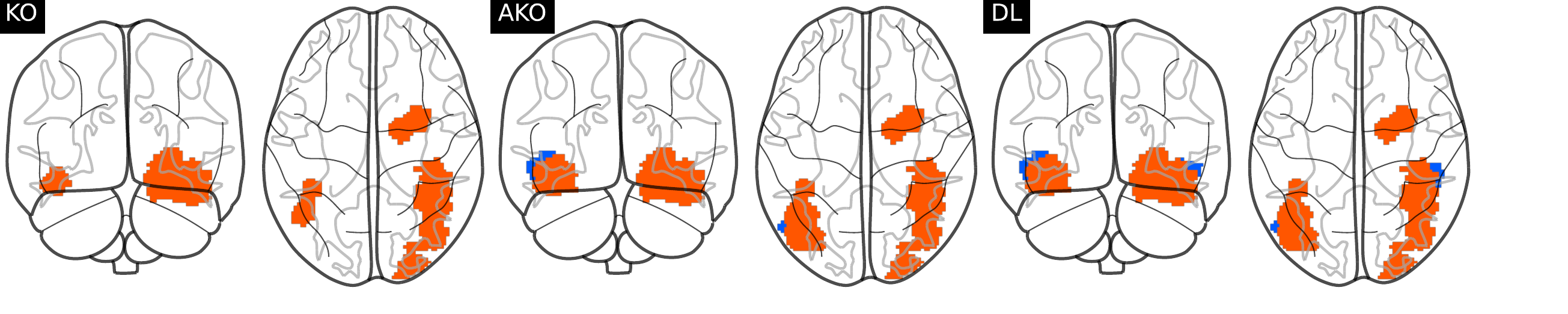}} \\
\raisebox{1cm}{Gambling} & \includegraphics[width=0.6\textwidth]{{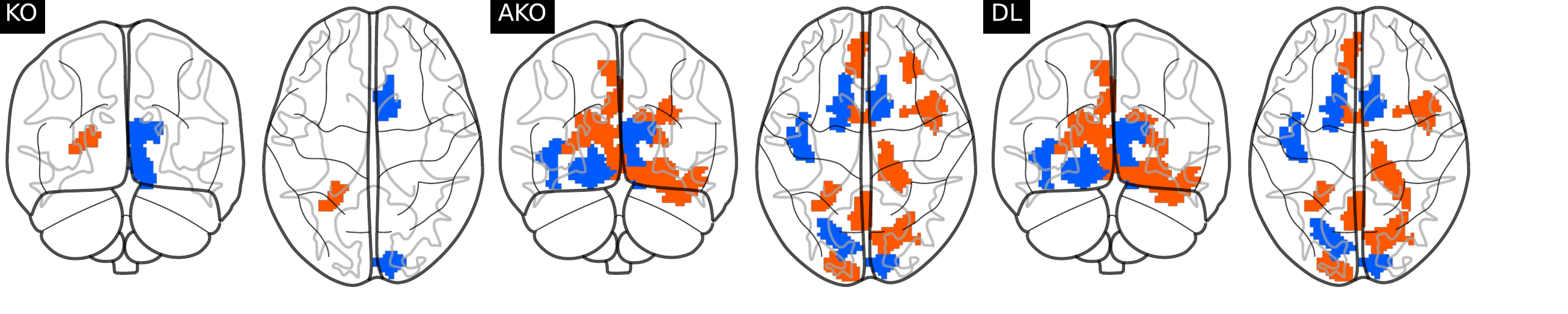}} \\
\raisebox{1cm}{Motor foot}  &\includegraphics[width=0.6\textwidth]{{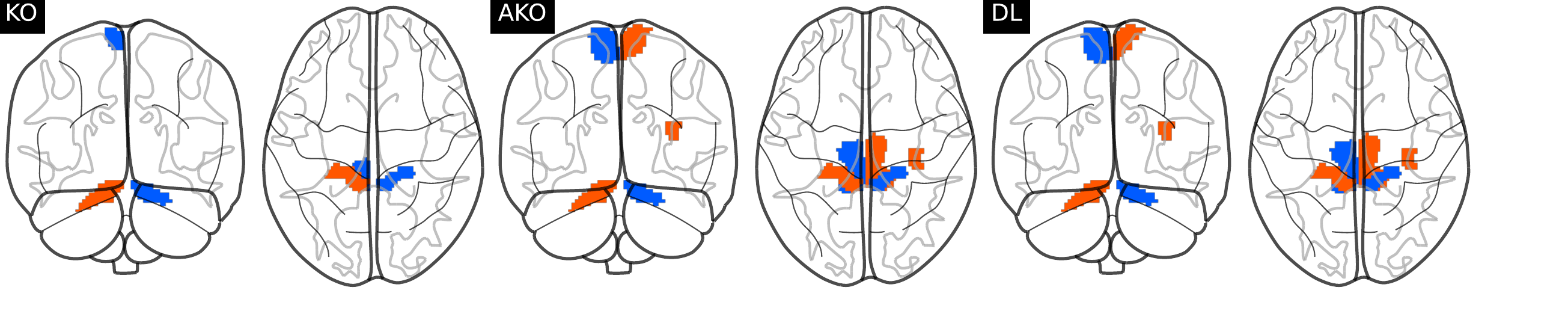}} \\
\raisebox{1cm}{Motor hand} &  \includegraphics[width=0.6\textwidth]{{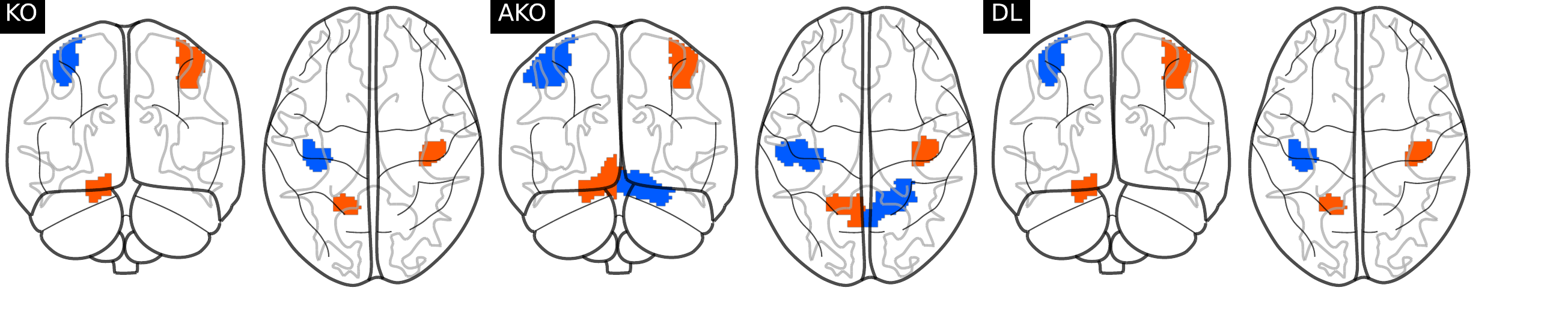}} \\
\raisebox{1cm}{Relational} & \includegraphics[width=0.6\textwidth]{{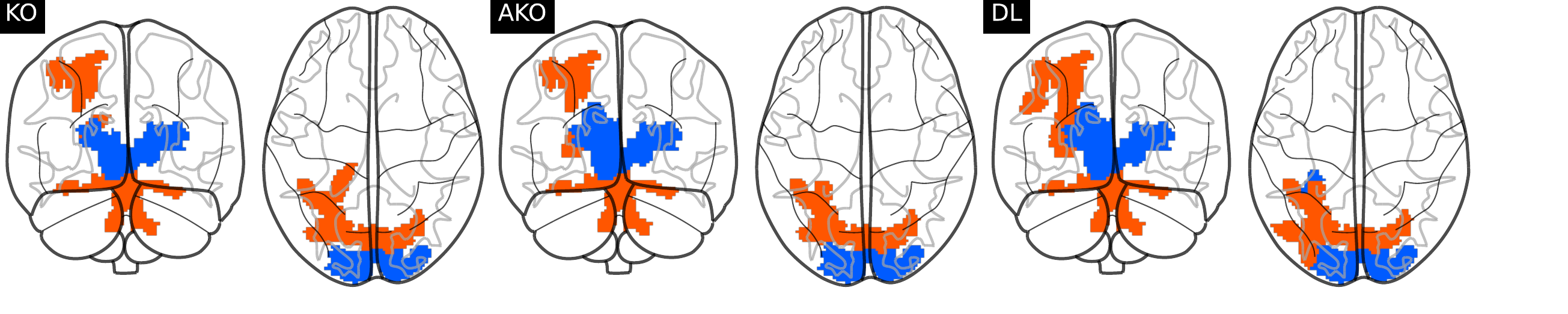}} \\
\raisebox{1cm}{Social} & \includegraphics[width=0.6\textwidth]{{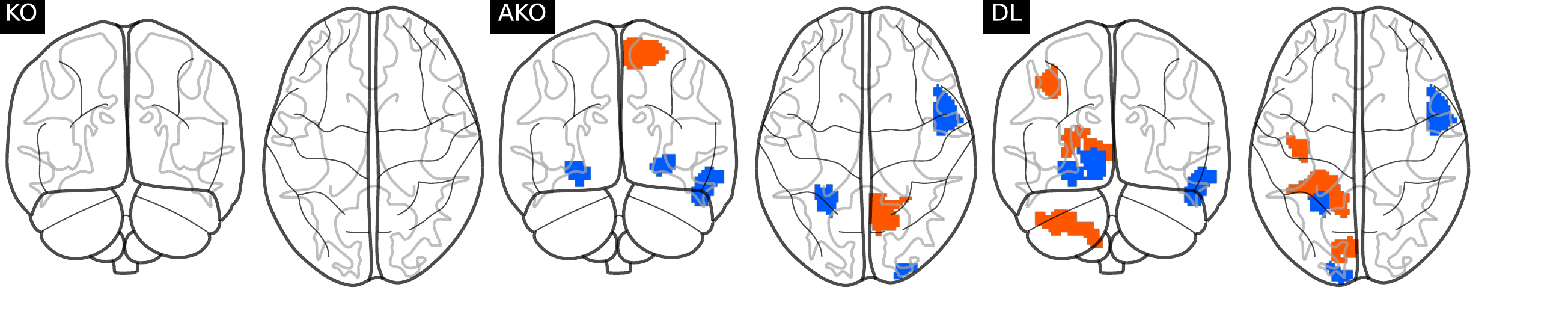}} \\
\raisebox{1cm}{Working memory} & \includegraphics[width=0.6\textwidth]{{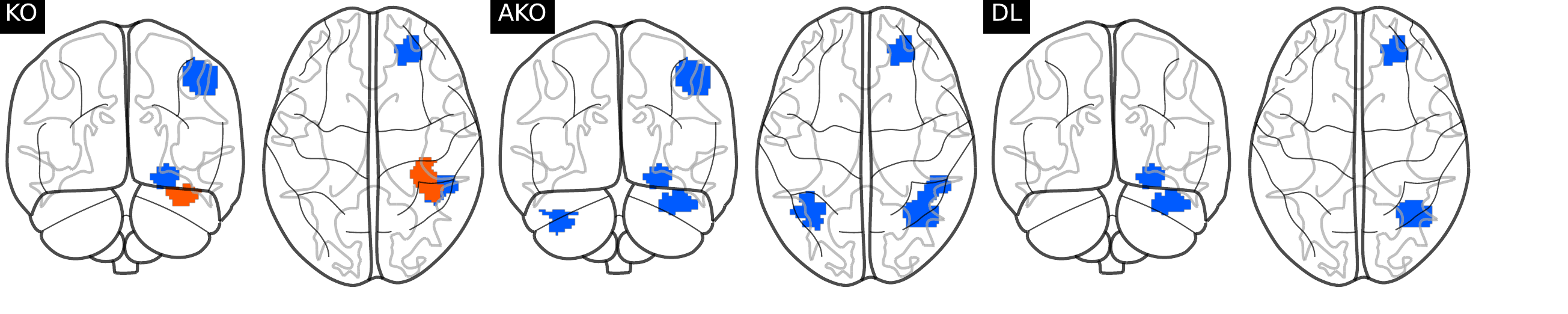}} \\
\end{tabular}
\caption {\textbf{Decoding maps obtained for seven classification tasks.} 
  Emotion, gambling, motor foot, motor hand, relational, social and working
  memory refer to 7 binary tasks that were considered based on the HCP900
  dataset. We observe that AKO is typically more sensitive than KO, and yields
  solution closer to that of an independent solution based on a desparsified-Lasso (DL) estimator.
\label{fig:hcp-all}
}
\end{figure}

%%%%%%%%%%%%%%%%%%%%%%%%%%%%%%%%%%%%%%%%%%%%%%%%%%%%%%%%%%%%%%%%%%%%%%%%%%%%%%%%

\end{document}